\documentclass[a4paper,10pt,reqno]{amsart}

%\documentclass[a4paper,11pt]{article}

%\documentclass[letterpaper, 10 pt, conference]{ieeeconf}
  % use above line letter sized paper % Comment this line out
\usepackage{hyperref}
\usepackage{amssymb}
\usepackage{eucal}
\usepackage{eufrak}
\usepackage{graphics}
\usepackage{graphicx}
\usepackage{epsfig}
\usepackage{multicol}
\usepackage{xypic}
\usepackage{amsmath}
\usepackage{wasysym}
\usepackage{dsfont}
\usepackage{float}

\usepackage{eufrak}
\usepackage{graphics}
\usepackage{graphicx}
\usepackage{epsfig}
\usepackage{multicol}
\usepackage{xypic}
\usepackage{amsmath}
\usepackage{color}

\newtheorem{proposition}{Proposition}[section]
\newtheorem{definition}{Definition}[section]

\newtheorem{remark}[definition]{Remark}

\newtheorem{theorem}[definition]{Theorem}
\newtheorem{algorithm}[definition]{Algorithm}

\newtheorem{corollary}[definition]{Corollary}

\newcommand{\lp}{\left(}
\newcommand{\rp}{\right)}
\newcommand{\lc}{\left\{}
\newcommand{\rc}{\right\}}
\newcommand{\der}{\partial}

\newcommand{\bra}{\langle}
\newcommand{\ket}{\rangle}
\newcommand{\R}{\mathds{R}}      %Numeros reales
      %Numeros naturales
\newcommand{\Z}{\mathds{Z}}      %Numeros enteros
      %Numeros complejos
      %Numeros racionales

\newcommand{\I}{\mathds{I}}
\newcommand{\Flder}{\rightarrow}

\usepackage{amssymb}

\newcommand{\proa}{A^*G \mbox{$\;$}_{\tau^*} \kern-3pt\times_\alpha
G \mbox{$\;$}_\beta \kern-3pt\times_{\tau^*} A^*G}

\newcommand{\alg}{\mathfrak{so}(3)}

\newcommand{\ca}{\mbox{cay}}
\newcommand{\ad}{\mbox{ad}}
\newcommand{\om}{\omega}

\newcommand{\al}{\mathfrak{g}}
\newcommand{\dal}{\mathfrak{g}^{*}}

\newcommand{\ald}{\mathfrak{g}^{_{D}}}
\newcommand{\algd}{\mathfrak{so}(3)^{_{D}}}

\newcommand{\aldp}{\mathfrak{g}^{_{D_{\epsilon}}}}
\newcommand{\algdp}{\mathfrak{so}(3)^{_{D_{\epsilon}}}}
\newcommand{\Ese}{\mathcal{S}}

\begin{document}

\title{On some aspects of the discretization of the Suslov problem}

\author[F. Jim\'enez]{Fernando Jim\'enez}
\address{F. Jim\'enez: Zentrum Mathematik der Technische Universit\"at M\"unchen, D-85747 Garching bei M\"unchen, Germany} \email{fernando.jimenez.alburquerque@gmail.com}

\author[J. Scheurle]{J\"urgen Scheurle}
\address{J. Scheurle: Zentrum Mathematik der Technische Universit\"at M\"unchen, D-85747 Garching bei M\"unchen, Germany} \email{scheurle@ma.tum.de}

\thanks{This research was supported by the DFG Collaborative Research Center TRR 109, ``Discretization in Geometry and Dynamics''. 
}

\keywords{Nonholonomic mechanics, discretization as perturbation, geo\-me\-tric integration, discrete variational calculus, Lie groups and Lie algebras, reduction of mechanical systems with symmetry}

\subjclass[2000]{34C15; 37J15; 37N05; 65P10; 70F25.}

\maketitle

\begin{abstract}
In this paper we explore the discretization of Euler-Poincar\'e-Suslov equations on $SO(3)$, i.e.  of the Suslov problem. We show that the consistency order corresponding to the unreduced and reduced setups, when the discrete reconstruction equation is given by a Cayley retraction map, are related to each other in a nontrivial way. We give precise conditions under
which general and variational integrators generate a discrete 
flow preserving the constraint distribution. We  establish general consistency bounds and illustrate the performance of several discretizations by some plots. Moreover, along the lines of \cite{JiSch} we show that any
constraints-preserving discretization may be understood as being generated by the exact evolution map of a time-periodic non-autonomous perturbation of the
original continuous-time nonholonomic system. 
\end{abstract}

\section{Introduction}

The Lagrangian formulation of mechanical systems with nonholonomic con\-straints
has been extensively studied in recent years (see \cite{Bl, Manolo} for a complete description and extensive bibliographies). In short, this kind of systems
are cha\-rac\-te\-ri\-zed by so-called nonholonomic constraints, i.e. constraints involving
both configuration as well as velocity variables, and which can not be integrated
to purely configuration-dependent constraints (in this case the constraints
are called holonomic). Moreover, the preservation of certain structural properties of mechanical systems  by sui\-ta\-ble integrators is a de\-li\-ca\-te issue upon which a lot of attention has been put by the Geometric Mechanics comunity (see the recent works, such as \cite{CoMa,SDD1,SJD,IMMM,KFMD,McPer}, which have introduced numerical integrators for
holonomic systems with very good energy behavior and various  preservation properties). The approaches in most of these references are based on the ideas of \cite{MarsdenWest,Mose}. In these works, the continuous variational principles are replaced by discrete ones aiming to obtain proper integrators approximating the continuous dynamics. We will call the integrators related to this framework {\it variational integrators}. Analogously, in the case of nonholonomic mechanics, the continuous Lagrange-d'Alembert's principle, which provides the actual dynamics, is replaced by a
discrete Lagrange-d'Alembert's principle on the discrete phase space. Of special interest are the seminal works on nonholonomic integration \cite{CoMa,McPer}, where a discrete version of the Lagrange-d'Alembert principle is proposed by introducing a proper discretization of the nonholonomic distribution.
In \cite{JiSch}, the focus has been on numerical integrators including variational integrators that 
exactly respect the original continuous constraint distribution. Also, corresponding consistency estimates were derived. In particular, it has been shown that any integrator that preserves the original continuous constraint distribution may be understood as being generated by the exact evolution map of a 
time-periodic non-autonomous perturbation of the continuous-time nonholonomic system, where 
the size of the perturbation is related to the order of consistency of the integrator.

Reduction theory is one of the
fundamental tools in the study of mechanical systems with symmetry and it concerns the removal of symmetries using the associated
conservation laws. A Lagrangian system is called symmetric w.r.t.  a Lie group if the Lagrangian function is invariant under the tangent lift of the action of the Lie group on the configuration manifold. Furthermore, for a symmetric mechanical system the process of reduction eliminates the directions along the group variables and thus provides a system with fewer degrees of freedom.  When the configuration ma\-ni\-fold is the Lie group itself,  for unconstrained Lagrangian systems   the process of reduction leads from the Euler-Lagrange equations to the Euler-Poincar\'e equations. Needless to say, nonholonomic systems may also possess symmetries, and the geometrical treatment of such situations has been studied in \cite{BlKrMaMu1996,CLMD} among other references. On the other hand, variational integrators for reduced systems were carefully studied from the theoretical point of view in \cite{MPS, MPS2}. The combination of these two issues, i.e. variational integrators in the context of  symmetric nonholonomic Lagrangian systems, has been addressed  in \cite{FeZen} in the case of a {\it ll system}, i.e. a nonholonomic system whose configuration manifold is a Lie group $G$, and where both the Lagrangian and the constraint distribution are invariant with respect to the induced left action of $G$ on $TG$. Here,the dynamics of the reduced  system is described by the {\it Euler-Poincar\'e-Suslov} equations \cite{Koz,Suslov}. Other interesting discretizations of symmetric nonholonomic systems can be found in \cite{Fedorov}.

The Euler-Poincar\'e-Suslov equations on $SO(3)$, i.e. the Suslov problem \cite{Suslov}, are the object of study in the present  paper, more concretely from the point of view of the results presented in \cite{JiSch}. Particularly, we focus on their discretization  and meticulously study  conditions under which the constraints are preserved by the corresponding integrator.  Both, general as wll as variational intgrators
will be considered. We study the relationship between the orders of consistency of integrators in the
unreduced and reduced setting, that are related by reconstruction and reduction, respectively. 
We find out that the relationship is nontrivial, and we figure it out precisely for the case of  a particular retraction map relating $SO(3)$ and $\alg$, namely the Cayley map. The study of consistency  requires the introduction of a suitable metric on $SO(3)$.  We present numerical results for several integrators
and carefully study their order of consistency. 
Finally, we address the issue of discrepancies between the continuous and the discrete dynamics 
for the Suslov problem based on   previous results in \cite{FiSch,JiSch}.
\medskip

The paper is structured as follows: In \S\ref{Preli}, \S\ref{Symm} and \S\ref{VIRS} we provide a comprehensive introduction to nonholonomic mechanics, variational nonholonomic integrators, symmetries of  nonholonomic Lagrangian systems, the Euler-Poincar\'e-Suslov equations and, finally, variational integrators for reduced systems. We provide a new version of the discrete Euler-Poincar\'e-Suslov equations, defined on the Lie algebra through the incorporation of a retraction map, in corollary \ref{CoroRetr}. \S\ref{Principal} establishes the link between the results in \cite{JiSch} and the reduced framework for $SO(3)$, i.e. we perform an exhaustive study of the Suslov problem. First, \S\ref{Consistenci} is devoted to the study of consistency orders as mentioned above.
Then,  in subsection \S\ref{dreps}, we study general discretizations, as well as sufficient conditions in the case of variational integrators that guarantee  preservationof the continuous-time constraint 
distribution. In \S\ref{GeneralDisc}, we give consistency results both for general and variational discretizations, and illustrate their performance through some plots.
Finally, \S\ref{Pertur} is devoted to the study of distribution-preserving discretizations of the Euler-Poincar\'e-Suslov problem viewed as perturbation of the continuous dynamics. Here, we apply the result by Fiedler and Scheurle \cite{FiSch} to the system under consideration, and establish a bound of the perturbation produced in the unreduced dynamics by a reduced integrator, and conversely.
\medskip

Throughout the paper, we use Einstein's convention for the summation over
repeated indices unless the opposite is stated.

\section{Preliminaries}\label{Preli}

\subsection{The nonholonomic setting}\label{NH}
We define a nonholonomic Lagrangian system on a Lie group as a triple $(G,L,D)$, where $G$ is the Lie group, $L:TG\Flder\R$ is the Lagrangian function and $D\subset TG$ is a {\it constraint distribution}, which we assume to be a constant rank {\it linear vector subbundle} of $TG$ and non-integrable in the Frobenious sense.  Locally, the  constraints are written as follows:
\begin{equation}\label{LC}
\phi^{\alpha}\lp g,\dot g\rp=\mu^{\alpha}_{i}\lp g\rp\dot g^{i}=0,\hspace{2mm} 1\leq \alpha\leq m,
\end{equation}
where $(g^i,\dot g^i)$, $i=1,...,n$, are  coordinates of $TG$, i.e. the constraints are linear w.r.t. the 
velocity variables. The annihilator of $D^{\circ}$ is locally given by
\[
D^{\circ}=\mbox{span}\lc\mu^{\alpha}=\mu_{i}^{\alpha}(g)\,dg^{i};\hspace{1mm} 1\leq \alpha\leq m\rc,
\]
where the one-forms $\mu^{\alpha}$ are assumed to be  linearly independent, i.e.  $\mbox{rank}\lp D\rp=n-m$, with $m<n$.

In addition to the distribution, we need to specify the dynamical evolution of the system through the Lagrangian function. In nonholonomic mechanics, the procedure leading from the Newtonian point of view to the Lagrangian one is given by the Lagrange-d'Alembert's principle. This principle says that a
continuously differentiable 
 curve $g:I\subset \R\Flder G$  describes an admissible motion of the system if
\[
\delta\int^{t_2}_{t_1}L\lp g\lp t\rp, \dot g\lp t\rp\rp dt=0
\]
with respect to all variations such that $\delta g\lp t\rp\in D_{g\lp t\rp}$, $t_1\leq t\leq t_2$,  the fixed endpoint condition is satisfied, and  the velocity of the motion  satisfies the constraints. Using Lagrange-d'Alembert's principle, we arrive at the nonholonomic equations, which in coordinates read 

\begin{subequations}\label{LdAeqs}
\begin{align}
\frac{d}{dt}\lp\frac{\der L}{\der\dot g^{i}}\rp-\frac{\der L}{\der g^{i}}&=\lambda_{\alpha}\,\mu^{\alpha}_{i}(g),\label{Con-1}\\\label{Con-2}
\mu_{i}^{\alpha}(g)\,\dot g^{i}&=0,
\end{align}
\end{subequations}
where $\lambda_{\alpha},\hspace{1mm} \alpha=1,...,m$ are ``Lagrange multipliers''. The right-hand side of equation (\ref{Con-1}) represents the reaction forces due to the constraints, and equations (\ref{Con-2}) represent the constraints themselves. Needless to say, under appropriate regularity conditions the equations \eqref{LdAeqs} generate a local flow within $D\subset TG$. For more details we refer to \cite{Arn,Bl}.

In the present  paper we are interested in $D$-preserving  discretizations of the solutions of  \eqref{LdAeqs}. We clarify this concept before proceeding.

\begin{definition}\label{Dpreserv}
Consider a sequence of points $\lc v_{g_k}\rc_{0:N}\in T_{g_k}G$, i.e. $\tau_G(v_{g_k})=g_k $ with $\tau_G:TG\Flder G$ being the canonical projection,  $k=0,...,N$. We say that this sequence is $D$-preserving if $v_{g_k}\in D_{g_k}$ for every $k$ (in other words, $\mu^{\alpha}(g_k) \dot g_k=0$ in  local coordinates $(g_k,\dot g_k)$ of $v_{g_k}$).
\end{definition}
The integer $N$ will be the number of steps  of a given integrator. This number is related to the time-step $\epsilon$ of the integrator  and the time interval $t_2-t_1$ by $N=(t_2-t_1)/\epsilon$.

\subsection{Nonholonomic integrators}

Discretizations of the Lagrange-d'A\-lem\-bert  principle for Lagrangian systems with nonholonomic cons\-traints have been introduced in \cite{CoMa,McPer} as a nonholonomic extension of variational integrators  (see
\cite{Hair,MarsdenWest,Mose}). To define a discrete nonholonomic system providing a discrete flow on a submanifold of $G\times G$ one needs three ingredients: a discrete Lagrangian, the constraint distribution $D\subset TG$ and a discrete constraint space $D_d\subset G\times G$. 

\begin{definition}\label{DiscNHSet} 
A discrete nonholonomic system is given by the quadruple $(G\times G,L_d,D_d, D)$, where:
\begin{enumerate}
\item $D_d$ is a submanifold of $G\times G$ of dimension $2n-m$ with the additional property that
\[
I_d=\lc (g,g)\,|\,g\in G\rc\subset D_d.
\]

\item $L_d:G\times G\Flder\R$ is the discrete Lagrangian, which is chosen as an appropriate approximation of the action integral w.r.t. one time step, i.e. $L_d(g_0, g_1)\approx \int^{t_0+\epsilon}_{t_0} L(g(t), \dot{g}(t))\; dt.$
\end{enumerate}
\end{definition}
We define the discrete Lagrange-d'Alembert principle (DLA) to be the extremization of the action sum 
\begin{equation}\label{DiscAC}
S_d=\sum_{k=0}^{N-1}L_d(g_k,g_{k+1})
\end{equation}
among all sequences of points $\lc g_k\rc_{0:N}$ with given fixed end points $g_0,g_N$, where the variations must satisfy $\delta g_k\in D_{g_k}$ (in other words $\delta g_k\in \mbox{ker}\,\mu^{\alpha}$) and $(g_k,g_{k+1})\in D_d$ for all $k\in\lc 0,...,N-1\rc$. This leads to the following set of discrete nonholonomic equations
\begin{subequations}\label{DLA}
\begin{align}
D_1L_d(g_k,g_{k+1})&+D_2L_d(g_{k-1},g_{k})=\lambda_{\alpha}\,\mu^{\alpha}(g_k),\label{DLAa}\\
(g_k,g_{k+1})&\in D_d\label{DLAb}.
\end{align}
\end{subequations}
For the sake of clarity, the condition \eqref{DLAb} may be rewritten as $\phi^{\alpha}_d(g_k,g_{k+1})=0$, where $\phi^{\alpha}_d:G\times G\Flder\R$ is the set of $m$ functions whose annihilation defines $D_d$. %(in other words, a suitable discretization of the nonholonomic constraints \eqref{LC}).
Equations \eqref{DLA}, where $\lambda_{\alpha}=(\lambda_{k+1})_{\alpha}$ is chosen appropriately by projecting onto $D_d$, define a local {\it discrete nonholonomic flow map} $F_{L_d}^{nh}:D_d\Flder D_d$ given by
\begin{equation}\label{NHFlow}
F_{L_d}^{nh}(g_{k-1},g_k)=(g_k,g_{k+1}),
\end{equation} 
where  $(g_{k-1},g_k)\in D_d$ and $g_{k+1}$ satisfyes \eqref{DLA}. Here, we assume  a regularity condition, say the matrix 
\[
\lp
\begin{array}{cc}
D_1D_2L_d(g_k,g_{k+1}) & \mu^{\alpha}(g_k)\\\\
D_2\phi^{\alpha}_d(g_k,g_{k+1}) &0
\end{array}
\rp
\]
is regular, to be  fullfiled for each $(g_k,g_{k+1})$ in a neighborhood of the diagonal of $G\times G$ (see \cite{CoMa} for further details).

\begin{remark}\label{RmkMulti}
{\rm In this paper, we approximate time-continuous, nonholonomic systems by discrete nonholonomic systems. Note that, throughout the paper, according to \cite{CoMa} and \cite{FeZen} (leading to equations \eqref{DLA} and \eqref{DEPEq}, respectively) we always choose the constraint distribution of the discretized system to be the constraint distribution of the original, time-continuous system. Also, we choose the same basis $\mu^\alpha$ ($\alpha$ = $1, . . . ,m$) for the annihilator of the constraint distribution in both cases. Thus, the scalar-valued Lagrange multipliers $\lambda_\alpha$ represent corresponding components (coordinates) of the reaction forces in both cases. In particular, this allows to estimate the approximation error for the reaction force in terms of the approximation errors for the Lagrange multipliers. Of course, this is still possible, when corresponding elements of the two bases differ by a distance,  the order of which with respect to the size of the time-step $\epsilon$ of the discretization is sufficiently high. The resulting reaction forces do not depend on the choice of those bases. In fact, they are even defined globally on $G$ by the Lagrange multipliers $\mu^\alpha$, since one can always choose a trivialization of the tangent bundle $TG$ of a Lie group $G$ (as we do below).}
\end{remark}

To obtain distribution preserving integrators (in the sense of Definition \ref{Dpreserv}) within this setting is not straightforward, since the equations \eqref{DLA} are defined on the discrete space $G\times G$ and moreover determine a discrete flow on the discrete distribution $D_d$. Therefore, one needs to locally relate $(g_k,g_{k+1})\in G\times G$ to $v_{g_k}\in TG$, a task accomplished in \cite{JiSch} by means of finite difference maps $\rho$ \cite{McPer}.

\begin{definition}\label{FDM}
A {\rm finite difference map} $\rho$ is a diffeomorphism $\rho:U(I_d)\Flder V(Z)$, where $U(I_d)$ is a neighborhood of the diagonal $I_d$ in $G\times G$, and $V(Z)$ denotes a neighborhood of the zero section of $TG$, i.e $Z:G\Flder TG$ s.t. $Z(g)=0_g\in T_gG$, which satisfies the following conditions:
\begin{enumerate}
\item $\rho(I_d)=Z$,
\item $\tau_G\circ\rho(U(I_d))=G$, where $\tau_G:TG\Flder G$ is the canonical projection,
\item $\tau_G\circ\rho|_{I_d}=\pi_1|_{I_d}=\pi_2|_{I_d}$, where $\pi_1$ and $\pi_2$  are the projections from $G\times G$ to its first and second component $G$, respectively.
\end{enumerate}
\end{definition}
For any finite difference map $\rho$, the so-called {\it velocity nonholonomic integrator} 
\begin{equation}\label{vnhi}
\tilde F_{L_d}:=\rho\circ F_{L_d}^{nh}\circ\rho^{-1},
\end{equation}
defines a flow $\tilde F_{L_d}:T_{\tilde g_k}G\Flder T_{\tilde g_{k+1}}G$; $v_{\tilde g_k}\mapsto v_{\tilde g_{k+1}}$ s.t. $\tau_G(v_{\tilde g_k})=\tilde g_k$ and $\tau_G(v_{\tilde g_{k+1}})=\tilde g_{k+1}$. In general, this flow is not $D-$preserving in the sense of Definition \ref{Dpreserv}, but sufficient conditions for that to hold are given in \cite{JiSch}, which in general require   the  discrete constraints to be given by $\phi_d^{\alpha}$, as well as  an appropriate redefinition of the discrete nodes $\lc g_k\rc\mapsto \lc \tilde g_k\rc$. The result is stated in the following proposition: 
\begin{proposition}\label{Preservation}
Assume that $v_{\tilde g_k}\in D_{\tilde g_k}$. If $D_d$ is defined by $\phi^{\alpha}_d:=\mu^{\alpha}\circ\rho:G\times G\Flder\R$, then  $ v_{\tilde g_{k+1}}$ defined by the velocity nonholonomic integrator \eqref{vnhi}, i.e. $\tilde F_{L_d}$, belongs to
$D_{\tilde g_{k+1}}$. In other words, $\tilde F_{L_d}: D_{\tilde g_k}\Flder D_{\tilde g_{k+1}}$, i.e. it generates a D-preserving sequence in the sense of Definition \ref{Dpreserv}, provided that $v_{\tilde g_0} \in D_{\tilde g_0}$ holds.
\end{proposition}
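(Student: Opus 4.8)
The plan is a direct unwinding of the definitions: the whole point of the choice $\phi^{\alpha}_d=\mu^{\alpha}\circ\rho$ is that, under the finite difference map $\rho$, membership in the discrete constraint space $D_d$ becomes literally equivalent to membership in $D$. So I would pick $(\tilde g_k,v_{\tilde g_k})\in D_{\tilde g_k}$ lying in the neighbourhood $V(Z)$ on which $\rho^{-1}$ is defined, and set $(g_{k-1},g_k):=\rho^{-1}(\tilde g_k,v_{\tilde g_k})\in U(I_d)\subset G\times G$. The basic observation is that each one-form $\mu^{\alpha}$ may be read as the fibrewise-linear function $TG\to\R$, $w\mapsto\mu^{\alpha}(\tau_G(w))\,w$, so that the hypothesis $v_{\tilde g_k}\in D_{\tilde g_k}$ is precisely the statement $\mu^{\alpha}(\tilde g_k)\,v_{\tilde g_k}=0$ for all $\alpha$. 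Consequently $\phi^{\alpha}_d(g_{k-1},g_k)=(\mu^{\alpha}\circ\rho)(g_{k-1},g_k)=\mu^{\alpha}(\tilde g_k)\,v_{\tilde g_k}=0$, i.e. $(g_{k-1},g_k)\in D_d$, so $(g_{k-1},g_k)$ lies in the domain of the discrete nonholonomic flow $F_{L_d}^{nh}$ (here one invokes the regularity condition near the diagonal stated above). It is exactly this hypothesis that makes the composition $\rho\circ F_{L_d}^{nh}\circ\rho^{-1}$ meaningful at the point in question.

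Next, by definition $F_{L_d}^{nh}:D_d\to D_d$, so the image $(g_k,g_{k+1}):=F_{L_d}^{nh}(g_{k-1},g_k)$ again lies in $D_d$, i.e. $\phi^{\alpha}_d(g_k,g_{k+1})=0$ for all $\alpha$. Finally, writing $(\tilde g_{k+1},v_{\tilde g_{k+1}}):=\rho(g_k,g_{k+1})$, which by \eqref{vnhi} is precisely $\tilde F_{L_d}(\tilde g_k,v_{\tilde g_k})$, the same reading of $\mu^{\alpha}$ as a function on $TG$ yields $\mu^{\alpha}(\tilde g_{k+1})\,v_{\tilde g_{k+1}}=(\mu^{\alpha}\circ\rho)(g_k,g_{k+1})=\phi^{\alpha}_d(g_k,g_{k+1})=0$ for every $\alpha=1,\dots,m$. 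Since the $\mu^{\alpha}$ span $D^{\circ}$ fibrewise, this is exactly $v_{\tilde g_{k+1}}\in D_{\tilde g_{k+1}}$, which is the claim.

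This is essentially a diagram chase, so I do not expect a genuine obstacle; the only care needed is structural rather than computational: keeping straight the double role of $\mu^{\alpha}$ (a one-form on $G$ versus a linear function on the fibres of $TG$), checking that all maps are composable on a neighbourhood of the diagonal where $\rho$ is a diffeomorphism and the regularity matrix is non-degenerate, and noting why the hypothesis on $(\tilde g_k,v_{\tilde g_k})$ is indispensable. It is also worth remarking, for contrast with the ``in general not $D$-preserving'' statement preceding the proposition, that the argument breaks down for an arbitrary discretization $\phi^{\alpha}_d$ of the constraints: only the choice $\phi^{\alpha}_d=\mu^{\alpha}\circ\rho$ turns ``$(g_k,g_{k+1})\in D_d$'' into ``$\rho(g_k,g_{k+1})\in D$'', which is what drives the whole proof.
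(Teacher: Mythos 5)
Your argument is correct, and it is exactly the intended one: the paper states this proposition without proof (deferring to \cite{JiSch}), and the proof there is precisely this definitional unwinding, where the choice $\phi^{\alpha}_d=\mu^{\alpha}\circ\rho$ makes $D_d=\rho^{-1}(D\cap V(Z))$ so that $D$-preservation of $\tilde F_{L_d}=\rho\circ F^{nh}_{L_d}\circ\rho^{-1}$ follows from $F^{nh}_{L_d}$ mapping $D_d$ to $D_d$. Your attention to the double role of $\mu^{\alpha}$ and to the domain/regularity issues is exactly the right care to take.
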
 
Therefore, the functions $\phi^{\alpha}_d$ prescribed in Proposition \ref{Preservation} may be considered as a suitable discretization of the nonholonomic constraints \eqref{LC}.

\section{Symmetries}\label{Symm}

\subsection{The Euler-Poincar\'e-Suslov equations}\label{LLSystems}
We say that the Lagrangian is {\rm invariant} under a group action $\Phi:G\times G\Flder G$, if $L$ is invariant under the lifted action of $G$ on $TG$, i.e.
$L\circ T\Phi_g=L.$ Such a symmetry allows to define a {\it reduced Lagrangian} on the reduced phase space 
$TG/G=(G\times\al)/G\cong\al$, say $l:\al\Flder\R$.  We have employed  in the previous relation the left trivialization of $TG$ represented by  the mapping $\mbox{tr}:TG\Flder G\times\al$, $v_g\mapsto\lp g,\,T_g\ell_{g^{-1}}v_g\rp$, where  $\ell:\,G\times G\Flder G;\, (g,h)\mapsto g\,h$; $\ell_g:G\Flder G$, $h\mapsto g\,h$, is the left action and $\al$ is the Lie algebra of $G$.
The reduced Lagrangian is obtained as follows using the symmetry:
\begin{equation}\label{RedLagra}
L(g,\dot g)=L(g^{-1}\,g,g^{-1}\,\dot g)=L(e,\xi)=:l(\xi)
\end{equation}
Here we are employing the shorthand notation $T_g\ell_h v_g=:h\,\dot g$ for $v_g\in T_gG$ with coordinates $(g,\dot g)$, $e\in G$ is the identity element and $g^{-1}\,\dot g:=\xi\in\al$ is called the {\it reconstruction equation}.

In the nonholonomic case, besides a left-invariant $L$ we shall consider as well a left-invariant distribution $D$. Both ingredients account for a left-left or ll system. Here $D\subset TG$ is left-invariant if and only if there exists a subspace $\ald\subset\al$ such that $D_g=T_e\ell_g\ald\subset T_gG$ for any $g\in G$. Let $a^{\alpha}\in\dal$, $\alpha=1,...,m$, be  a basis of the annihilator of the subspace $\ald$, i.e.
\begin{equation}\label{alD}
\ald=\lc\xi\in\al\,|\,\bra a^{\alpha},\xi\ket=0\,,\alpha=1,...,m\rc.
\end{equation}
Consequently, the left-invariant constraints on $TG$ are defined by the equations $\bra a^{\alpha}\,,\,g^{-1}\dot g\ket=0$. 
This last equation establishes the correspondence between the nonholonomic constraints of a left-left system and the  ones  in \eqref{LC}. 

According to \cite{Koz}, the  dynamics of a ll system is determined by the so-called {\it Euler-Poincar\'e-Suslov} equations :
\begin{equation}\label{EuPoiSus}
\begin{split}
\frac{d}{dt}\lp\frac{\der l}{\der\xi}\rp&=\ad^*_{\xi}\lp\frac{\der l}{\der\xi}\rp+\lambda_{\alpha}a^{\alpha},\\
\bra a^{\alpha},\xi\ket&=0.
\end{split}
\end{equation}
Under certain regularity conditions, these equations provide the solution curve $\xi(t)\subset \ald$. Furthermore, the solution curve w.r.t.  the group variables $g(t)$ is obtained through the reconstruction equation $\dot g=g\,\xi$.

Analogously to Definition \ref{Dpreserv}, we now define  the notion of $\ald-$preservation.

\begin{definition}\label{gDpreserv}
Consider the sequence of points $\lc \xi_k\rc_{0:N}\in \al$,  $k=0,...,N$. We say that this sequence is $\ald$-preserving if $\xi_k\in\ald$ for every $k$. In other words, $\bra a^{\alpha},\xi_k\ket=0$ w.r.t.  the local representation of the nonholonomic reduced constraints \eqref{alD}.
\end{definition}

We observe that, taking into account  the symmetries  of the proposed problem and using the left trivialization (i.e. $v_{g_k}=(g_k,\xi_k)$), $D-$preservation in the sense of Definition \ref{Dpreserv} implies $\ald-$preservation in the sense of Definition \ref{gDpreserv}. The converse is not true in general, since it requires  to determine the sequence $\lc g_k\rc_{0:N}$ from the sequence $\lc \xi_k\rc_{0:N}$. This process is not trivial and may be called {\it discrete reconstruction}. We shall see that the variational procedure provides a possible approach for it.

\section{Discretization of ll systems}\label{VIRS}

The discretization of nonholonomic ll systems in accordance with the DLA algorithm \eqref{DLA} is thoroughly considered in 
\cite{FeZen}. The authors proposed a discretisation scheme under the natural assumptions that both the 
 discrete Lagrangian $L_d:G\times G\Flder\R$ and the discrete constraint space  $D_d\subset G\times G$ are invariant
 under the diagonal action of $G$ on $G\times G$ by left multiplication. We briefly recall this construction and its consequences.
 
 By invariance of $L_d$, one can define a {\em reduced discrete Lagrangian} $l_d:G\Flder \R$ by the rule 
 \begin{equation*}
L_d(g_k,g_{k+1})=L_d(e,g_{k}^{-1}g_{k+1}) =: l_d(W_k)
\end{equation*}
where $W_k:=g_{k}^{-1}g_{k+1}\in G$ is the  {\it left incremental displacement}. One should  interpret $W_k\in G$ as a finite difference approximation
on the group  of the infinitesimal  velocity $\xi =g^{-1}\dot g$ that belongs to the Lie algebra. The relation
$g_{k+1}=g_kW_k$  is   the discrete counterpart of the reconstruction equation $\dot g=g\xi$ in this scenario (how the equation $g_{k+1}=g_kW_k$ relates to the algebra elements will be  discussed below). 
 
 Similarly, by left invariance of $D_d$ there exists a {\em discrete displacement subvariety} $\mathcal{S} \subset G$ determined
 by the condition
 \begin{equation*}
(g_k,g_{k+1})\in D_d \qquad \mbox{if and only if} \qquad W_k=g_{k}^{-1}g_{k+1}\in \mathcal{S}.
\end{equation*}
This leads to  the definition of functions $\varphi_d^{\alpha}:G\Flder\R$ whose annihilation determines $\Ese$, namely
\begin{equation}\label{DiscDist}
\phi_d^{\alpha}(g_k,g_{k+1})=\phi_d^{\alpha}(g_k^{-1}\,g_k, g_k^{-1}\,g_{k+1})=\varphi_d^{\alpha}(W_k) = 0,\; \; \forall\, W_k\in\Ese,\,\,\alpha=1,...,m.
\end{equation}
Finally, the {\it reduced action sum} is given by
\begin{equation}\label{DiscACRed}
s_d=\sum_{k=0}^{N-1}l_d(W_k).
\end{equation}
The discrete counterpart of the Euler-Poincar\'e-Suslov equations \eqref{EuPoiSus} is established in the next theorem, presented in \cite{FeZen}, which is an extension of the results in \cite{BoSu} and \cite{MPS} concerning the discrete version of the classical Euler-Poincar\'e reduction.
\begin{theorem}\label{RedTheo}
Let $L_d:G\times G\Flder\R$ be a left-invariant discrete Lagrangian, $l_d:G\Flder\R$ be the reduced discrete Lagrangian, and $D\subset TG$, $D_d\subset G\times G$ be the constraint distribution (also left-invariant) and the discrete constraint submanifold respectively. Then the following assertions are equivalent:
\begin{enumerate}
\item $\lc g_k\rc_{0:N}$ is a critical point of the action \eqref{DiscAC} for constrained variations $\delta g_k\in D_{g_k}$ s.t. $\delta g_0=\delta g_N=0$.
\item $\lc g_k\rc_{0:N}$ satisfies the discrete nonholonomic equations \eqref{DLA}.

\item $\lc W_k\rc_{0:N-1}$ is a critical point of the reduced action sum \eqref{DiscACRed}, with respect to variations $\delta W_k$ induced by the constrained variations $\delta g_k\in D_{g_k}$.
\item $\lc W_k\rc_{0:N-1}$ satisfies the {\rm reduced nonholonomic equations} or {\rm discrete Euler-Poincar\'e-Suslov equations}:
\begin{subequations}\label{DEPEq}
\begin{align}
-r^{*}_{W_{k+1}} l^{\prime}_d(W_{k+1})+\ell_{W_{k}}^{*}l^{\prime}_d(W_{k})&=\lambda_{\alpha}\,a^{\alpha},\label{DEPEqa}\\ 
\varphi^{\alpha}_d(W_{k+1})&=0,\label{DEPEqb}
\end{align}
\end{subequations}
for $k=0,...,N-2$, where again $\lambda_{\alpha}=(\lambda_{k+1})_{\alpha}$ are chosen appropriately.
\end{enumerate} 
\end{theorem}

\subsection{Discretization using natural coordinate charts}\label{naturalCharts}
One observes that  \eqref{DEPEq} generates a discrete evolution in $G$ (more concretely in $\Ese$) while \eqref{EuPoiSus} generates a continuous evolution in $\al$ (more concretely in $\ald$).  A possible relationship between $G$ and $\al$  is achieved by means of a so-called
{\it retraction map} $\tau:\al\Flder G$: a local analytic 
diffeomorphism near the identity such that $\tau(\xi)\tau(-\xi)=e$,
where $\xi\in\mathfrak g$. Such a $\tau$ provides a natural coordinate chart on $G$, and the $W_k$ are regarded as small displacements on
the Lie group, linking $g_k$ and $g_{k+1}$. Thus, it is possible to express each $W_k$  through a Lie
algebra element 
\[
\xi_k=\tau^{-1}(g_k^{-1}g_{k+1})/\epsilon=\tau^{-1}(W_k)/\epsilon  
\] 
that can be regarded as the averaged velocity of this
displacement.  The finite difference
$W_k=g_{k}^{-1}\,g_{k+1}\in G$, which in general is an element of a nonlinear space,
can then be represented by the vector $\xi_{k}$. In other words
\begin{equation}\label{llreconstruction}
g_{k+1}=g_k\tau(\epsilon\,\xi_k),
\end{equation}
which may be considered as the {\it discrete reconstruction equation} in the ll scenario. Two standard choices for $\tau$ are the exponential map and the Cayley map (see \cite{Hair} for further details; the latter will be defined for $SO(3)$ in \S\ref{CaySO3}).
The derivative of $\tau$ and its inverse  are defined as follows (see \cite{Bou}):
\begin{definition}\label{Retr}
Given a retraction map $\tau:\mathfrak{g}\Flder G$, its {\rm left trivialized tangent  map} {\rm $\mbox{d}\tau_{\xi}:\mathfrak{g}\Flder\mathfrak{g}$} and the {\rm inverse} {\rm $\mbox{d}\tau_{\xi}^{-1}:\mathfrak{g}\Flder\mathfrak{g}$} of that, are defined such that for $g=\tau(\xi)\in G$ and $\eta,\xi\in\mathfrak{g}$, the following holds
  {\rm \begin{eqnarray*}
    &&\der_{\xi}\tau(\xi)\,\eta=\mbox{d}\tau_{\xi}\,\eta\,\tau(\xi),\\
    &&\der_{\xi}\tau^{-1}(g)\,\eta=\mbox{d}\tau^{-1}_{\xi}(\eta\,\tau(-\xi)).
  \end{eqnarray*}}
\end{definition}
Using these definitions, variations $\delta\xi_k$ and $\delta g_k$ are
related by
\begin{equation}\label{ConsVaria}
\delta \xi_k=\hbox{d}\tau^{-1}_{\epsilon\xi_k}(-\eta_k+\hbox{Ad}_{\tau(\epsilon\xi_k)}
\eta_{k+1})/\epsilon,
\end{equation}
where $\al\ni\eta_k:=g_k^{-1}\delta g_k$; this expression is obtained by
straightforward differentiation of $\xi_k=\tau^{-1}(g_{k}^{-1}\,g_{k+1})/\epsilon$. Note that  $\eta_0=\eta_N=0$ since $\delta g_0=\delta g_N=0$.

 Now, let us consider a Lagrangian function $\tilde l_d:\al\Flder\R$ in order to define a suitable approximation of the reduced action functional $s(\xi)=\int_{t_1}^{t_2}l(\xi(t))dt$  by
\begin{equation}\label{tildeaction}
\tilde s_d=\sum_{k=0}^{N-1}\tilde l_d(\xi_k).
\end{equation}
\begin{remark}
{\rm 
Calling $\tilde l_d:\al\Flder\R$ a   ``discrete Lagrangian''  could be misleading. We are allocating the adjective} discrete {\rm for $L_d:G\times G\Flder\R$ and $l_d:G\Flder \R$, the latter in the reduced case. These are considered as the discrete counterparts of the continuous Lagrangians $L:TG\Flder\R$ and $l:\al\Flder\R$. This is consistent with  the general framework introduced in \cite{Wein} according to which $TG$ and $\al$ are the Lie algebroids associated to the Lie groupoids $G\times G$ and $G$, respectively.   Therefore, we call $\tilde l_d:\al\Flder\R$  just  ``Lagrangian''.}
\end{remark}

Considering a retraction map, the following result is a corollary of theorem \ref{RedTheo}.
\begin{corollary}\label{CoroRetr}
Let $\tilde{l_d}:\al\Flder\R$ be defined as above,  $\tau$ a retraction map and the functions $\tilde{\varphi}^{\alpha}_d:\al\Flder\R$ given by $\tilde{\varphi}^{\alpha}_d:=\varphi^{\alpha}_d\circ\tau$ with $\varphi^{\alpha}_d$ as in \eqref{DiscDist}. Then, the following statements are equivalent:
\begin{enumerate}
\item $\lc \xi_k\rc_{0:N-1}$ is a critical point of the action functional \eqref{tildeaction} with respect to variations $\delta\xi_k$ and a sequence $\lc\eta_k\rc_{0:N}\in\ald$ as itroduced in  \eqref{ConsVaria}.
\item $\lc \xi_k\rc_{0:N-1}$ satisfies the {\rm reduced nonholonomic equations} or {\rm discrete Euler-Poincar\'e-Suslov equations} defined on the Lie algebra:
{\rm
\begin{equation}\label{AltNHEq}
\begin{split}
(\mbox{d}\tau^{-1}_{\epsilon\xi_{k+1}})^{*}\,\tilde l_{d}^{\prime}(\xi_{k+1})-(\mbox{d}\tau^{-1}_{-\epsilon\xi_{k}})^{*}\,\tilde l_{d}^{\prime}(\xi_{k})&=\lambda_{\alpha}\,a^{\alpha},\\ 
\tilde{\varphi}^{\alpha}_d(\xi_{k+1})&=0,
\end{split}
\end{equation}
}
for $k=0,...,N-2$ and where $\lambda_{\alpha}=(\lambda_{k+1})_{\alpha}$ are chosen appropriately.
\end{enumerate}
\end{corollary}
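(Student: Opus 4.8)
The plan is to run the calculus of variations behind Proposition~\ref{retrfree} almost verbatim, the only difference being that the test sequence $\lc\eta_k\rc$ is now confined to the subspace $\ald$, while the discrete constraint is carried along on the sequence itself; the corollary then emerges as the nonholonomic counterpart of Proposition~\ref{retrfree}, in exactly the way Theorem~\ref{RedTheo} is the nonholonomic counterpart of the discrete Euler--Poincar\'e equations~\eqref{DEP}. As in the proof of Proposition~\ref{equivalence} I would argue by direct computation, first establishing (1)$\Rightarrow$(2) and then reversing every step for (2)$\Rightarrow$(1). Throughout, ``critical point'' in (1) is understood, as in the discrete Lagrange--d'Alembert principle, among \emph{admissible} sequences, i.e.\ those with $W_k=\tau(\epsilon\xi_k)\in\Ese$.

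Starting from $\delta\tilde s_d=\sum_{k=0}^{N-1}\bra\tilde l_d'(\xi_k),\delta\xi_k\ket$ and substituting the constrained variations~\eqref{ConsVaria}, $\delta\xi_k=\mbox{d}\tau^{-1}_{\epsilon\xi_k}(-\eta_k+\Ad_{\tau(\epsilon\xi_k)}\eta_{k+1})/\epsilon$, I would transpose the linear maps $\mbox{d}\tau^{-1}_{\epsilon\xi_k}$ and $\Ad_{\tau(\epsilon\xi_k)}$ so as to move all $\xi$-dependence onto $\tilde l_d'(\xi_k)$, invoke the identity $\Ad^*_{\tau(\epsilon\xi_k)}(\mbox{d}\tau^{-1}_{\epsilon\xi_k})^*=(\mbox{d}\tau^{-1}_{-\epsilon\xi_k})^*$ recalled after~\eqref{redEPalEq}, re-index the term pairing with $\eta_{k+1}$ by $k\mapsto k-1$, and use $\eta_0=\eta_N=0$ to discard the endpoint contributions. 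This collects the variation into
\[
\epsilon\,\delta\tilde s_d=\sum_{k=1}^{N-1}\bra (\mbox{d}\tau^{-1}_{-\epsilon\xi_{k-1}})^*\,\tilde l_d'(\xi_{k-1})-(\mbox{d}\tau^{-1}_{\epsilon\xi_k})^*\,\tilde l_d'(\xi_k),\,\eta_k\ket,
\]
which is the same bookkeeping that produces~\eqref{redEPalEq}, only with the $\eta_k$ no longer arbitrary.

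Now the nonholonomic restriction enters. Since $\eta_1,\dots,\eta_{N-1}$ vary independently over the whole subspace $\ald$, the vanishing of $\delta\tilde s_d$ for all such sequences is equivalent to demanding, for each $k$, that the bracketed covector annihilate $\ald$; by~\eqref{alD} this holds if and only if it lies in $\mbox{span}\lc a^1,\dots,a^m\rc$, i.e.\ equals $-\lambda_\alpha a^\alpha$ for multipliers $\lambda_\alpha=(\lambda_{k+1})_\alpha$ uniquely fixed by projecting onto the annihilator. Shifting $k\mapsto k+1$ and changing sign yields the first line of~\eqref{AltNHEq}; the second line $\tilde\varphi^\alpha_d(\xi_{k+1})=0$ ($k=0,\dots,N-2$) is nothing but admissibility, namely $\varphi^\alpha_d(\tau(\epsilon\xi_k))=:\tilde\varphi^\alpha_d(\xi_k)=0$, i.e.\ $W_k\in\Ese$. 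Reading the displayed identity and this last equivalence backwards gives (2)$\Rightarrow$(1): if~\eqref{AltNHEq} holds then $\epsilon\,\delta\tilde s_d$ vanishes for every variation with $\eta_k\in\ald$ and $\eta_0=\eta_N=0$, so $\lc\xi_k\rc$ is critical.

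The part I expect to be most delicate, exactly as in Propositions~\ref{retrfree} and~\ref{equivalence}, is the bookkeeping around~\eqref{ConsVaria}: transposing $\mbox{d}\tau^{-1}_{\epsilon\xi_k}$ and $\Ad_{\tau(\epsilon\xi_k)}$ correctly, applying $\Ad^*_{\tau(\epsilon\xi_k)}(\mbox{d}\tau^{-1}_{\epsilon\xi_k})^*=(\mbox{d}\tau^{-1}_{-\epsilon\xi_k})^*$, and aligning the summation ranges so that $\eta_0=\eta_N=0$ kills precisely the boundary terms and no others. A secondary point deserving a sentence is why confining the test vectors to $\ald$ is the correct variational encoding of the constraint --- equivalently, that the directions~\eqref{ConsVaria} with $\eta_k\in\ald$ are exactly those tangent to $\Ese$ at $W_k$; this is where one uses that $\tau$ is a local diffeomorphism near $e$ and that the $a^\alpha$ are independent, and it is the observation that makes the statement a genuine corollary of Theorem~\ref{RedTheo} rather than an independent calculation.
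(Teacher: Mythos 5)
Your proposal is correct and follows essentially the same route as the paper's own proof: compute $\delta\tilde s_d$, substitute the constrained variations \eqref{ConsVaria}, transpose and apply the identity $\Ad^*_{\tau(\epsilon\xi_k)}(\mbox{d}\tau^{-1}_{\epsilon\xi_k})^*=(\mbox{d}\tau^{-1}_{-\epsilon\xi_k})^*$, re-index using $\eta_0=\eta_N=0$, and conclude from $\eta_k\in\ald$ that the resulting covector lies in $\mbox{span}\lc a^{\alpha}\rc$, yielding \eqref{AltNHEq} after the shift $k\mapsto k+1$. Your added remarks on admissibility (reading $\tilde\varphi^{\alpha}_d(\xi_{k+1})=0$ as $W_k\in\Ese$) and on why restricting the test vectors to $\ald$ correctly encodes the constraint are points the paper's proof leaves implicit, but they do not change the argument.
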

\begin{proof}
By direct computations we obtain
\[
\begin{split}
\delta\sum_{k=0}^{N-1}\tilde l_d(\xi_k)&=\sum_{k=0}^{N-1}\left<\tilde l^{\prime}_d(\xi_k)\,,\,\hbox{d}\tau^{-1}_{\epsilon\xi_k}(-\eta_k+\hbox{Ad}_{\tau(\epsilon\xi_k)}
\eta_{k+1})/\epsilon\right>\\
&=\sum_{k=1}^{N-1}\left<\hbox{Ad}_{\tau(\epsilon\xi_{k-1})}^*(\mbox{d}\tau^{-1}_{\epsilon\xi_{k-1}})^{*}\,\tilde l_{d}^{\prime}(\xi_{k-1})-(\mbox{d}\tau^{-1}_{\epsilon\xi_{k}})^{*}\,\tilde l_{d}^{\prime}(\xi_{k})\,,\,\eta_k/\epsilon\right>=0,
\end{split}
\]
where in the last line we have rearranged the summation index taking into account that $\eta_0=\eta_N=0$. Using that $\hbox{Ad}_{\tau(\epsilon\xi_{k-1})}^*(\mbox{d}\tau^{-1}_{\epsilon\xi_{k-1}})^{*}=(\mbox{d}\tau^{-1}_{-\epsilon\xi_{k-1}})^{*}$ and considering that $\eta_k\in\ald$, we arrive at
\[
(\mbox{d}\tau^{-1}_{-\epsilon\xi_{k}})^{*}\,\tilde l_{d}^{\prime}(\xi_{k})-(\mbox{d}\tau^{-1}_{\epsilon\xi_{k+1}})^{*}\,\tilde l_{d}^{\prime}(\xi_{k+1})=\lambda_{\alpha}a^{\alpha},\quad k=0,...,N-2, 
\]
where we have used the shift $k\mapsto k+1$, and therefore the claim holds.
\end{proof}
Note that  the second equation in \eqref{AltNHEq} defines a subset, which we will denote by $\aldp\subset \al$,  given by the zero level set of $\tilde{\varphi}^{\alpha}_d$, such that in general $\aldp\neq\ald$. Note as well, that the Lagrange multipliers $(\lambda_{k+1})_{\alpha}$ must be chosen appropriately in this case by projecting onto $\aldp$. The equations \eqref{AltNHEq} define a discrete flow $F^{nh}_{\xi}:\aldp\Flder\aldp$, $\xi_{k}\mapsto\xi_{k+1},$ only under some regularity conditions, conditions which may be obtained using the implicit function theorem and which locally amount to the invertibility of the following matrix
\begin{equation}\label{RegCond}
\lp 
\begin{array}{cc}
[\nabla(\mbox{d}\tau^{-1}_{\epsilon\xi_{k+1}})^*)]\tilde l_d^{\prime}(\xi_{k+1})+(\mbox{d}\tau^{-1}_{\epsilon\xi_{k+1}})^{*}\tilde l^{\prime\prime}_d(\xi_{k+1}) & a^{\alpha}\\
\nabla\tilde\varphi^{\alpha}_d(\xi_{k+1}) & 0
\end{array}
\rp.
\end{equation}
Using coordinates we can write the upper-left entry of this matrix as
\[
\begin{split}
[\nabla(\mbox{d}\tau^{-1}_{\epsilon\xi_{k+1}})^{*}]\,\tilde l_{d}^{\prime}(\xi_{k+1})&=\frac{\der(\mbox{d}\tau^{-1}_{\epsilon\xi_{k+1}})_b\,{}^{a}}{\der\xi^c}\,\frac{\der\tilde l_d}{\der\xi^a}(\xi_{k+1}),\\
(\mbox{d}\tau^{-1}_{\epsilon\xi_{k+1}})^{*}\tilde l^{\prime\prime}_d(\xi_{k+1})&=(\mbox{d}\tau^{-1}_{\epsilon\xi_{k+1}})_b\,{}^{a}\,\frac{\der^2\tilde l_d}{\der\xi^c\der\xi^a}(\xi_{k+1}).
\end{split}
\]
Here,  the pull-back of the inverse of the  trivialized  tangent retraction map is a linear operator locally defined by $(\mbox{d}\tau^{-1}_{\epsilon\xi_{k+1}})^{*}:=(\mbox{d}\tau^{-1}_{\epsilon\xi_{k+1}})_b\,{}^{a}$; we shall see that this definition is useful in the case of matrix groups. 

\subsection{Consistency}

One of our main goals in this work is studying the relationship  between the order of consistency of integrators approximating the solutions of \eqref{LdAeqs} and \eqref{EuPoiSus}.  Therefore the following definitions are in order (cf. Remark \ref{RmkMulti}):

\begin{definition}\label{UnredConsis}
By a $(p,s)$ order discretization ((p,s) itegrator) of a nonholonomic problem \eqref{LdAeqs} defined on a Lie group  we understand a sequence of points $\lc(v_{g_k},\lambda_k)\rc_{0:N}\in T_{g_k}G\times\R^{m}$, $\tau_G(v_{g_k})=g_k$, $k=0,...,N$, s.t. 
\[
\begin{split}
&1)\quad|\tau_G(v_{g(t_k+\epsilon)})-\tau_G(v_{g_{k+1}})|\sim O(\epsilon^{r+1}),\\
&2)\quad|T_{g(t_k+\epsilon)}\ell_{g(t_k+\epsilon)^{-1}}v_{g(t_k+\epsilon)}-T_{g_{k+1}}\ell_{g_{k+1}^{-1}}v_{g_{k+1}}|\sim O(\epsilon^{l+1}),
\end{split}
\]
with {\rm min}$(r,l)=p$ and, moreover, {\rm$|\lambda^{{\tiny \mbox{unr}}}(t_k+\epsilon)-\lambda_{k+1}|\sim O(\epsilon^{s+1})$} with $p,s\geq0$. 
\end{definition}
The continuous dynamics ($g(t)$, $v_{g(t)}$, $\lambda^{{\tiny \mbox{unr}}}(t)$) is obtained from the nonholonomic equations \eqref{LdAeqs} (we use the superscript {\it unr} for the multipliers to refer to the {\it unreduced} case.  By $|\cdot - \cdot|$ we will denote (with some abuse of notation) the distance between two elements in several spaces. In this sense, some remarks are in order: 

\begin{itemize}
\item In 1) we are measuring the distance beween points in the Lie group, namely 
\[
|\tau_G(v_{g(t_k+\epsilon)})-\tau_G(v_{g_{k+1}})|=|g(t_k+\epsilon)-g_{k+1}|.
\]
Since $G$ is a nonlinear space in general,  a suitable metric needs to be introduced. We pick a suitable one in the case of $SO(3)$ below.
\item The two vectors $v_{g(t_k+\epsilon)}\in T_{g(t_k+\epsilon)}G$ and $v_{g_{k+1}}\in T_{g_{k+1}}G$  belong to two different vector spaces. Thus, we left translate them to the algebra $\al$, which is a vector space in order to measure their distance. In the case of $\alg$ we will pick the Killing metric to do that.
\item  The multipliers belong to $\R^m$, therefore in this case $|\cdot - \cdot|$ means the usual Euclidean metric.
\end{itemize}
 Now we consider the reduced case.

\begin{definition}\label{RedConsis}
By a $(p,s)$ order discretization ((p,s) integrator)  of the Euler-Poincar\'e-Suslov equations \eqref{EuPoiSus}  we understand a sequence of points $\lc(\xi_k,\lambda_k)\rc_{0:N}\in \al\times\R^{m}$, $k=0,...,N$, s.t. 
{\rm\[
\begin{split}
&1)\quad|\xi(t_k+\epsilon)-\xi_{k+1}|\sim O(\epsilon^{p+1}),\\
&2)\quad|\lambda^{{\tiny\mbox{red}}}(t_k+\epsilon)-\lambda_{k+1}|\sim O(\epsilon^{s+1})
\end{split}
\]}
with $p,s \ge 0$.
\end{definition}
In this case, $|\cdot - \cdot|$ in 1) means a suitable distance in $\al$, while in 2) it is again the Euclidean distance in $\R^m$. The continuous dynamics is determined by the Euler-Poincar\'e-Suslov equations \eqref{EuPoiSus}, and we employ the superscript {\it red} for the multipliers to refer to the  {\it reduced} case. 

Note that Definitions \ref{UnredConsis} and \ref{RedConsis} are completely general, i.e. in principle the  sequences involved need {\it not}  to be $D-$preserving or $\ald-$preserving  in the senses of Definitions \ref{Dpreserv} and \ref{gDpreserv}. 

\begin{proposition}\label{PropoCons1}
A $(p,s)$ integrator of \eqref{LdAeqs} (Definition \ref{UnredConsis}) generates a $(l,s)$ integrator of the Euler-Poincar\'e-Suslov equations \eqref{EuPoiSus} (Definition \ref{RedConsis}).
\end{proposition}
\begin{proof}
Considering the ll symmetry of the problem and left trivialization, we observe that condition 2) in Definition \ref{UnredConsis} implies 
\[
\begin{split}
&|\xi(t_k+\epsilon)-\xi_{k+1}|=|T_{g(t_k+\epsilon)}\ell_{g(t_k+\epsilon)^{-1}}v_{g(t_k+\epsilon)}-T_{g_{k+1}}\ell_{g_{k+1}^{-1}}v_{g_{k+1}}|\sim O(\epsilon^{l+1}).
\end{split}
\]
On the other hand, the relationship between the multipliers in the unreduced and reduced settings is $\lambda^{\tiny\mbox{unr}}(T_g\ell_{g^{-1}}v_g)=\lambda^{\tiny\mbox{red}}(\xi)$, where $\lambda^{\tiny\mbox{unr}}$  are left invariant functions on $TG$ (more precisely on $D$). Therefore
\[
\begin{split}
\lambda^{\tiny\mbox{unr}}(t_k+\epsilon) & =\lambda^{\tiny\mbox{unr}}(v_{g(t_k+\epsilon)})  =\lambda^{\tiny\mbox{unr}}(T_{g(t_k+\epsilon)}\ell_{g(t_k+\epsilon)^{-1}} v_{g(t_k+\epsilon)})\\
 & =\lambda^{\tiny\mbox{red}}(\xi(t_k+\epsilon))=\lambda^{\tiny\mbox{red}}(t_k+\epsilon),
\end{split}
\]
and it follows directly that
\[
|\lambda^{{\tiny \mbox{red}}}(t_k+\epsilon)-\lambda_{k+1}|=|\lambda^{{\tiny \mbox{unr}}}(t_k+\epsilon)-\lambda_{k+1}|\sim O(\epsilon^{s+1}).
\]
This finishes the proof.
\end{proof}
The converse is not trivial since it depends on the discrete reconstruction process \eqref{llreconstruction} and the  metric chosen on $G$. We will consider the particular case of $SO(3)$ and the Cayley map in \S\ref{Consistenci}.

\section{Application to $SO(3)$: the Suslov problem}\label{Principal}
Let us consider the group $SO(3)$ and its  corresponding Lie algebra $\alg$. The latter  is isomorphic to the Euclidean space $\R^3$ through the isomorphism $\hat\cdot:\R^3\Flder\alg$, $\omega\mapsto\hat\omega$, given by
\begin{equation}\label{Isom}
\hat\omega=\lp
\begin{array}{ccc}
0&-\omega_3&\omega_2\\
\omega_3&0&-\omega_1\\
-\omega_2&\omega_1&0
\end{array}
\rp\in\alg
\end{equation}
for $\omega=(\omega_1,\omega_2,\omega_3)\in\R^3$. In this representation, the antisymmetric bracket operation is the standard vector product in $\R^3$ (namely $[\hat\eta,\hat\mu]=\widehat{\eta\times\mu}$ for $\eta,\mu\in\R^3$). 

Furthermore, in the following sections the definition of a distance in $SO(3)$ and a distance in $\alg$ will be relevant, respectively. In the latter case, it is common to pick the Killing form, since it is invariant under all the automorphisms of $\alg$. It is given by
\begin{equation}\label{Killing}
(\hat\xi,\hat\eta)_e=-\frac{1}{2}\mbox{trace}\lp\hat\xi\hat\eta\rp=\frac{1}{2}\mbox{trace}(\hat\xi^T\hat\eta),
\end{equation}
where $\hat\xi,\hat\eta\in\mathfrak{so}(3)$ and $\hat\xi\hat\eta$ represents the usual matrix product. As it is well-known, this bilinear form corresponds to the usual Euclidean product on $\R^3$. Thus, the distance between $\hat\xi,\hat\eta\in\mathfrak{so}(3)$ may be defined by
\begin{equation}\label{distalg}
|\xi-\eta|=|\hat\xi-\hat\eta|=(\hat\xi-\hat\eta,\hat\xi-\hat\eta)_e^{1/2}.
\end{equation}
Concerning the Lie group, to pick a distance is a subtle issue. So far, we have employed $g$ to denote a  point in a general Lie group $G$; from now on $R$ will denote  the points in $SO(3).$ We introduce  the {\it self-distance} in $SO(3)$ as
\begin{equation}\label{Groudist}
\mbox{dist}(R,R)=|I-R\,R^T|,
\end{equation}
where $I$ denotes the identity in $SO(3)$ and we use the Euclidean metric for square matrices, induced by $|R|^2=\sum_{ij}|R_{ij}|^2$. As it is natural, if $R\in SO(3)$, then dist$(R,R)=0$. In other words, what \eqref{Groudist} measures is how {\it far apart}  a matrix is from being orthonormal in terms of the Euclidean metric. This motivates the definition 
\begin{equation}\label{MetMat}
\mbox{dist}(R_1,R_2) :=|I-R_1R_2^T|
\end{equation}
measuring the distance between $R_1$ and $R_2$ within $SO(3)$. It is easy to prove that dist$:SO(3)\times SO(3)\Flder\R$ is indeed a metric.

\subsection{The Cayley map on $SO(3)$}\label{CaySO3}

Concerning the retraction map $\tau$, we employ the Cayley map since it is simple and also  computational e\-ffi\-cient  \cite{Hair}. The Cayley map $\ca:\alg\Flder SO(3)$ is defined by $\ca(\hat\omega)=(I-\frac{\hat\omega}{2})^{-1}(I+\frac{\hat\omega}{2})$. Using the identification \eqref{Isom}, we can find the particular expression
\begin{equation}\label{caypart}
\ca(\hat\omega)=I+\frac{1}{1+|\frac{\omega}{2}|^{2}}\lp\hat\omega+\frac{\hat\omega^{2}}{2}\rp.
\end{equation}
Furthermore,
according to Definition \ref{Retr} it follows
\[
\mbox{d}\ca_{\omega}=\frac{1}{1+|\frac{\omega}{2}|^{2}}(I+\frac{\hat\omega}{2}),\,\,\,\,\,\mbox{d}\ca_{\omega}^{-1}=I-\frac{\hat\omega}{2}+\frac{\omega\,\omega^{T}}{4}.
\]

\subsection{The Suslov problem}\label{SusSO3}

The Suslov problem is a well-known example of a left-left system, introduced in \cite{Suslov}. It describes the motion of a rigid body suspended at one of its points in the presence of a constraint that forces the component of the body angular velocity in a direction fixed in the body frame to vanish. 

Let $\I=(\I_{ij})$ be the inertia tensor $\I:\alg\Flder\alg^*$  of the body, $\I^{-1}=(\I^{ij})$ its inverse, and $\omega\in\R^3$ be the body angular velocity vector. The dynamics is determined through the reduced Lagrangian \eqref{RedLagra}, which in this case reads $l:\alg\Flder\R$
\begin{equation}\label{RedSusLag}
l(\hat\omega)=\frac{1}{2}\bra\I\omega,\omega\ket.
\end{equation}
Let $a$ be the direction, fixed in the body frame, of the vanishing component of the angular velocity. Thus, the nonholonomic constraint reads $\bra a,\omega\ket=0$, where we are using the bracket as the pairing between $(\R^3)^*$ and $\R^3$. We observe that only one constraint is allowed on $\alg$. If there were two independent constraints, then the distribution would be integrable (therefore {\it holonomic}). Without loss of generality, we may choose $a$ as the third component of the body frame $\lc e_1,e_2,e_3\rc$ in $\R^3$, say  $a=e_3=(0,0,1)$. Then, the constraint becomes $\omega_3=0$. Taking into account \eqref{EuPoiSus} and \eqref{RedSusLag}, the Euler-Poincar\'e-Suslov equations, written in $\R^3$, are
\begin{equation}\label{SusPro}
\begin{split}
\I\,\dot\omega&=\I\,\omega\times\omega+\lambda\,e_3,\\
\omega_3&=0,
\end{split}
\end{equation}
where $\lambda\in\R$ is the Lagrange multiplier. Componentwise, we have
\begin{equation}\label{RGB2}
\I\,\lp
\begin{array}{c}
\dot\omega_1\\
\dot\omega_2\\
0
\end{array}
\rp=\lp
\begin{array}{c}
-\omega_2\lp\I_{3i}\omega_i\rp\\
\omega_1\lp\I_{3i}\omega_i\rp\\
\omega_2\lp\I_{1i}\omega_i\rp-\omega_1\lp\I_{2i}\omega_i\rp
\end{array}
\rp+\lambda\,\lp
\begin{array}{c}
0\\
0\\
1
\end{array}
\rp,
\end{equation}
where, from now on, $i=\lc1,2\rc$ (we point out that these equations might be simplified without loss of generality chosing $\I_{12}=0$). After a straightforward computation we arrive at
\begin{equation}\label{ContDyn}
\begin{split}
\dot\omega_1&=-\frac{1}{|\I_m|}\lp \I_{22}\omega_2+\I_{12}\omega_1\rp\lp \I_{3i}\omega_i\rp,\\
\dot\omega_2&=\,\,\,\,\frac{1}{|\I_m|}\lp \I_{21}\omega_2+\I_{11}\omega_1\rp\lp \I_{3i}\omega_i\rp,
\end{split}
\end{equation}
where $\I_m=\lp\begin{array}{cc}
\I_{11}&\I_{12}\\
\I_{21}&\I_{22}
\end{array}\rp$ is  non-degenerate (it is symmetric and positive definite since $\I$ also is); moreover
\begin{equation}\label{LagMult}
\begin{split}
\lambda(\omega)=\omega_1(\I_{2i}\omega_i)&-\omega_2(\I_{1i}\omega_i)\\
&+\frac{\I_{3i}\omega_i}{|\I_m|}\lp(\I_{32}\I_{21}-\I_{31}\I_{22})\omega_2+(\I_{32}\I_{11}-\I_{31}\I_{12})\omega_1\rp.
\end{split}
\end{equation}
In other words, what happens to the Suslov equations \eqref{SusPro} in the pre\-sen\-ce of the nonholonomic constraint $\omega_3=0$ is that they decouple into the differential part \eqref{ContDyn}, i.e. a system of nonlinear ODEs which we will denote by $\dot\omega=f(\omega)$ for simplicity, and the algebraic part \eqref{LagMult}.

\subsection{Order of consistency }\label{Consistenci}
Now we consider the converse statement of Proposition \ref{PropoCons1} for the Suslov problem, where the metric \eqref{MetMat} and the reconstruction equation \eqref{llreconstruction} are given by the Cayley map, i.e. $R_{k+1}=R_k$cay$(\epsilon\hat\omega)$.

\begin{proposition}\label{TeoL}
Consider {\rm $R_{k+1}=R_k$cay$(\epsilon\hat\omega)$} and the metric \eqref{MetMat}. Then a $(p,s)$ integrator of the Suslov problem \eqref{SusPro} (in the sense of Definition \ref{RedConsis}) generates a {\rm ($1,s$)} integrator of the unreduced problem  \eqref{LdAeqs} when $G=SO(3)$ (in the sense of Definition \ref{UnredConsis}). 
\end{proposition}
\begin{proof}
Regarding the multipliers, the argument in the proof of Proposition \ref{PropoCons1} is again valid, i.e.
\[
|\lambda^{{\tiny \mbox{unr}}}(t_k+\epsilon)-\lambda_{k+1}|=|\lambda^{{\tiny \mbox{red}}}(t_k+\epsilon)-\lambda_{k+1}|\sim O(\epsilon^{s+1}).
\]
 Regarding the dynamical part, we observe that the discrete reconstruction equation $R_{k+1}=R_k$cay$(\epsilon\hat\omega)$ now produces the complete sequence $\lc R_k\rc_{0:N}$. Thus, we can left translate the algebra points, which leads to 
\[
\begin{split}
&|T_{g(t_k+\epsilon)}\ell_{g(t_k+\epsilon)^{-1}}v_{g(t_k+\epsilon)}-T_{g_{k+1}}\ell_{g_{k+1}^{-1}}v_{g_{k+1}}|=|\xi(t_k+\epsilon)-\xi_{k+1}|\sim O(\epsilon^{p+1}).
\end{split}
\]
Finally, we need to consider the distance between $R(t_k+\epsilon)$ and $R_{k+1}$ according to \eqref{MetMat}.
Consider the Taylor expansion $R(t_k+\epsilon)=R(t_k)+\epsilon\,\dot R(t_k)+O(\epsilon^2)$. On the other hand,  we have  cay$(\epsilon\hat\omega)=I+\epsilon\hat\omega+\frac{\epsilon^2}{2}\hat\omega^2+O(\epsilon^3)$ according to \eqref{caypart}. This yields  
\begin{equation}\label{DistFinal}
\begin{split}
&|I-R(t_k+\epsilon)R_{k+1}^T|\\
&\,\,\,\,=|I-(R(t_k)+\epsilon\mbox{Ad}_{R(t_k)}\hat\omega(t_k)+O(\epsilon^2))(I-\epsilon\hat\omega+\frac{\epsilon^2}{2}\hat\omega^2+O(\epsilon^3))R_k^T|,
\end{split}
\end{equation}
where we have taken into account the continuous reconstruction equation, i.e. $\dot R=R\hat\omega$ (which in this case actually means $\dot R=\lp\mbox{Ad}_R\hat\omega\rp R$) and the skew-symmetry of $\hat\omega$ when taking the transpose of cay$(\epsilon\hat\omega)$. After imposing  the  initial condition $R(t_k)=R_k$, straightforward computations show that the  terms linear  in $\epsilon$  cancel, while the terms quadratic in $\epsilon$ do not cancel in general, which leads to 
\[
|I-R(t_k+\epsilon)R_{k+1}^T|\sim O(\epsilon^2), \; \mbox{as } \epsilon \to 0.
\]
In other words, the algorithm is consistent of order 1 for  points in $SO(3)$. According to Definition \ref{UnredConsis}, the consistency order of the dynamical part therefore is min($1,p$)$=1$. Thus, the claim of Proposition \ref{TeoL} follows. 
\end{proof}

\subsection{Preservation of the constraints under discretization}\label{dreps} 
The left trivialization is a powerful tool in order to study the preservation of the constraints under discretization for the Suslov problem. Let us consider the initial data $(R_k,\hat\om_k)$ satisfying the nonholonomic constraints \eqref{alD}, i.e. $(\om_k)_3=0$. Consequently, by left trivialization, the associated pair $(R_k,v_{R_k})=(R_k,R_k\hat\om_k)$ also satisfies the original unreduced constraints. Thus, we can construct $R_{k+1}$ by the discrete reconstructin equation \eqref{llreconstruction}, say $R_{k+1}=R_k\tau(\epsilon\hat\om_k)$. Finally, we just need to define $\hat\om_{k+1}$ in terms of the previous data such that it satisfies the constraints, i.e. $\hat\om_{k+1}\in\algd$. This can be done by choosing a suitable discretization of \eqref{SusPro}, namely:
\begin{equation}\label{ DSP}
\begin{split}
\mbox{ DSP}(\hat\om_k,\lambda_{k+1};\hat\om_{k+1})&=0,\\
(\om_{k+1})_3&=0,
\end{split}
\end{equation}
where by  DSP$(\hat\om_k,\lambda_{k+1};\hat\om_{k+1})=0$ we denote a discretization of the first equation in \eqref{SusPro} (the initials are named after Discrete Suslov Problem). In general, this discretization  can be understood as a mapping  DSP$:\alg\times\R\times\alg\Flder\alg^*$. For instance, the variational procedure, through Corollary \ref{CoroRetr}, provides a particular  DSP$(\hat\om_k,\lambda_{k+1};\hat\om_{k+1})=0$ by means of the first equation in \eqref{AltNHEq}, namely
\begin{equation}\label{PartCase}
\mbox{DSP}(\hat\om_k,\lambda_{k+1};\hat\om_{k+1})= (\mbox{d}\tau^{-1}_{-\epsilon\hat\om_{k}})^{*}\,\tilde l_{d}^{\prime}(\hat\om_{k})-(\mbox{d}\tau^{-1}_{\epsilon\hat\om_{k+1}})^{*}\,\tilde l_{d}^{\prime}(\hat\om_{k+1})-\lambda_{k+1}e_3,
\end{equation}
for a general $\tau$. We  denote the coupled discrete equations \eqref{ DSP} by 
\[
\overline{\mbox{DSP}}(\hat\om_k,\lambda_{k+1};\hat\om_{k+1})=0,
\]
and assume them to be regular enough to determine $\hat\om_{k+1}\in\algd$ and $\lambda_{k+1}$ in terms of $\hat\om_k$; particularly, this regularity condition may be described locally, according to the implicit function theorem, by the regularity of the matrix
\begin{equation}\label{REGLOC}
\lp
\begin{array}{cc}
\der_3\mbox{ DSP}(\hat\om_k,\lambda_{k+1};\hat\om_{k+1}) & \der_2\mbox{ DSP}(\hat\om_k,\lambda_{k+1};\hat\om_{k+1})\\
e_3   &   0
\end{array}
\rp, 
\end{equation}
for close enough $\hat\om_k$ and $\hat\om_{k+1}$. Here $\der_i$ denotes the partial derivative with respect to the $i$-th slot in DSP$(\cdot,\cdot;\cdot)$.  This process defines a discrete local flow $(R_k,\hat\om_k)\mapsto(R_{k+1},\hat\om_{k+1})$, and furthermore $(R_k,v_{R_k})\mapsto (R_{k+1},v_{R_{k+1}})$, which schematically leads to the following algorithm.
\begin{algorithm}\label{algo}
\begin{enumerate}
\item[]
\item Input data $(R_k,\hat\om_k)$ s.t. $(\om_k)_3=0,$
\item Set $v_{R_k}=R_k\hat\om_k$,
\item Define $R_{k+1}=R_k\tau(\epsilon\hat\om_k)$,
\item Obtain $\lambda_{k+1}$ and $\hat\om_{k+1}$ from {\rm$\overline{\mbox{ DSP}}(\hat\om_k,\lambda_{k+1};\hat\om_{k+1})=0$} s.t.  $(\om_{k+1})_3=0,$
\item Output data $(R_{k+1},\hat\om_{k+1}, \lambda_{k+1})$,
\item Set $v_{R_{k+1}}=R_{k+1}\hat\om_{k+1}$, Output data $(R_{k+1},v_{R_{k+1}}).$
\end{enumerate}
\end{algorithm}
We note as well that the discrete flow $(R_k,\hat\om_k)\mapsto(R_{k+1},\hat\om_{k+1})$ is well-defined on $SO(3)\times\alg$, also 
$v_{R_k}\mapsto v_{R_{k+1}}$ on $TSO(3)$, for a general $\tau$ and $\epsilon$ small enough.  
\begin{remark}
{\rm
We observe that Algorithm \ref{algo} generates a sequence $\lc\hat\omega_k\rc_{0:N}$ which is $\algd-$preserving in the sense of Definition \ref{gDpreserv}. Moreover, as pointed out, through the process of discrete reconstruction $R_{k+1}=R_k\tau(\epsilon\hat\omega)$ and left trivialization, from $\lc\hat\omega_k\rc_{0:N}$ it also generates a sequence $\lc v_{R_k}\rc_{0:N}$ that is $D-$preserving in the sense of Definition \ref{Dpreserv}. 
}
\end{remark}
On the other hand, the variational procedure described in Theorem \ref{RedTheo} and Corollary \ref{CoroRetr} does {\it not necessarily} provide  $\algd-$preserving integrators. More concretely, the discretization of the constraints provided by \eqref{AltNHEq} reads $\tilde\varphi_d(\hat\om_{k+1})=0$, generating a perturbed constraint set  $\algdp\subset\alg$ in the zero level set of $\tilde\varphi_d$ in general. It follows quite obviously that  $\algdp=\algd$, and thus the $\algd-$preservation is obtained, if we define $\tilde\varphi_d:\alg\Flder\R$ by $\tilde\varphi_d(\cdot)=\bra e_3,\cdot\ket$.  With that choice, through left trivialization we obtain a  $D-$preserving dicretization of the unreduced problem. 
\begin{remark}
{\rm
To generalize the previous conclusions to any ll system on a general Lie group $G$ is not trivial, since we do not have an equivalent of the DSP discretization for the general Euler-Poincar\'e-Suslov equations \eqref{EuPoiSus}. However, the last observatiion can be generalized for the discrete variational scheme \eqref{AltNHEq} with  $\tilde\varphi^{\alpha}_d:=\bra a^{\alpha},\cdot\ket$, yielding the following algorithm:

\begin{algorithm}\label{algoGene}
\begin{enumerate}
\item[]
\item Input data $(g_k,\xi_k)$ s.t. $\bra a^{\alpha},\xi_k\ket=0,$
\item Set $v_{g_k}=g_k\xi_k$,
\item Define $g_{k+1}=g_k\tau(\epsilon\xi_k)$,
\item Under appropriate regularity conditions: Obtain $\lambda_{k+1}$ and  $\xi_{k+1}$ from \eqref{AltNHEq},  such that $\bra a^{\alpha},\xi_{k+1}\ket = 0$,
\item Output data $(g_{k+1},\xi_{k+1}, \lambda_{k+1})$,
\item Set $v_{g_{k+1}}=g_{k+1}\xi_{k+1}$, Output data $(g_{k+1},v_{g_{k+1}}).$
\end{enumerate}
\end{algorithm}
It is easy to see that this algorithm generates a $\ald-$preserving sequence $\lc \xi_k\rc_{0:N}$ and a $D-$preserving sequence $\lc v_{g_k}\rc_{0:N}$.
}
\end{remark}

\medskip

Now we proceed to study some particular discretizations of the Suslov problem.

\subsection{General discretizations}\label{GeneralDisc}

It is interesting to note that any discretization $\overline{\mbox{DSP}}(\omega^k,\lambda_{k+1};\omega^{k+1})=0$\footnote{In this subsection and the next one we are going to raise the index $k$ in the $\om$ variables, to avoid any misleading mixing with the $\R^3$ index, say $\om_i$.} \eqref{PartCase}, respecting the local regularity condition \eqref{REGLOC} and applied to \eqref{RGB2}, leads to particular discretizations of \eqref{ContDyn} and \eqref{LagMult}. Conversely, a general discretization of \eqref{ContDyn} prescribes a DSP of  the Suslov problem, for which we can derive an interesting result. Prior to its statement, we recall that		 $\dot\omega=f(\omega)$ is a system of nonlinear ODEs defined on a vector space, and consequently we can apply any standard numerical method of arbitrary consistency order (Euler, midpoint rule, Runge-Kutta, etc.).  Furthermore, the decoupling of \eqref{RGB2} into differential and algebraic parts allows some freedom. In particular, we choose
\begin{equation}\label{Discretization}
\begin{split}
\I_m\,\lp
\begin{array}{c}
\frac{\omega_1^{k+1}-\omega_1^{k}}{\epsilon}\\\\
\frac{\omega_2^{k+1}-\omega_2^{k}}{\epsilon}
\end{array}
\rp &=\lp
\begin{array}{c}
l_1(\omega^k,\omega^{k+1},\epsilon)\\\\
l_2(\omega^k,\omega^{k+1},\epsilon)
\end{array}
\rp,\\
\lambda_{k+1}&=\lambda(\om^{k+1}),
\end{split}
\end{equation}
where we pick $\lambda_{k+1}$ according to \eqref{LagMult} as a natural choice. We assume $l_1,l_2$ to be smooth functions chosen in such a way that they generate a $p-$th order consistent one-step discretization of \eqref{ContDyn}, i.e. $|\omega(t_k+\epsilon)-\omega^{k+1}|\sim O(\epsilon^{p+1})$, with $p\geq 1$. Namely 
\begin{equation}\label{Discretization2}
\begin{split}
\frac{\omega_1^{k+1}-\omega_1^{k}}{\epsilon}&=\frac{1}{|\I_m|}\lp\I_{22}l_1(\omega^k,\omega^{k+1},\epsilon)-\I_{12}l_2(\omega^k,\omega^{k+1},\epsilon)\rp,\\
\frac{\omega_2^{k+1}-\omega_2^{k}}{\epsilon}&=\frac{1}{|\I_m|}\lp-\I_{21}l_1(\omega^k,\omega^{k+1},\epsilon)+\I_{11}l_2(\omega^k,\omega^{k+1},\epsilon)\rp.
\end{split}
\end{equation}
Now we have all the necessary ingredients to establish the following result.

\begin{proposition}
Any $p-$th order discretization \eqref{Discretization2}, $p\geq 1$, of \eqref{ContDyn} generates a discretization {\rm $\overline{\mbox{DSP}}(\omega^k,\lambda_{k+1}; \omega^{k+1})=0$} \eqref{Discretization} of the Suslov problem \eqref{SusPro}, of order $(p,p)$  (in the sense of Definition \ref{RedConsis}), and consequently at least of order {\rm $(1,p)$} for \eqref{LdAeqs} (in the sense of Definition \ref{UnredConsis}).
\end{proposition}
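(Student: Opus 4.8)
The plan is to verify that the scheme \eqref{Discretization} is a $(p,p)$ order integrator of the reduced equations \eqref{EuPoiSus} in the sense of \S\ref{Consistenci}, and then to invoke item 2 of theorem \ref{TeoL} to transfer this consistency to the unreduced system \eqref{LdAeqs}. The key observation, which is really what the whole argument rests on, is the decoupling carried out in \S\ref{SusSO3}: the continuous Euler-Poincar\'e-Suslov flow starting on the constraint manifold satisfies $\omega_3(t)\equiv 0$, has $(\omega_1(t),\omega_2(t))$ solving the ODE $\dot\om=f(\om)$ of \eqref{ContDyn}, and has Lagrange multiplier $\lambda(t)=\lambda(\om(t))$ with $\lambda(\cdot)$ the explicit smooth function of \eqref{LagMult}.

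First I would note that the constraint is preserved exactly: \eqref{Discretization} updates only $\omega_1,\omega_2$, so $\omega_3^{k+1}=\omega_3^{k}=0$, hence $\hat\om^{k+1}\in\algd$ and the constraint equation contained in $\overline{\mbox{DREPS}}$ holds identically. Since $\I_m$ is non-degenerate, the local regularity condition \eqref{REGLOC} reduces to solvability of \eqref{Discretization2} for $\om^{k+1}$, which holds for $\epsilon$ small enough, including for implicit choices of $l_1,l_2$ such as the midpoint rule; thus the scheme genuinely defines a $\overline{\mbox{DREPS}}$ flow.

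Next I would bound the two one-step errors. For the dynamical variable: by hypothesis \eqref{Discretization2} is a $p$-th order one-step discretization of \eqref{ContDyn}, so $|\omega_i(t_k+\epsilon)-\omega_i^{k+1}|\sim O(\epsilon^{p+1})$ for $i=1,2$, while the third component is exact; combining these and using \eqref{distalg} gives $|\hat\om(t_k+\epsilon)-\hat\om^{k+1}|=|\om(t_k+\epsilon)-\om^{k+1}|\sim O(\epsilon^{p+1})$. For the multiplier: the scheme sets $\lambda_{k+1}=\lambda(\om^{k+1})$ with $\lambda(\cdot)$ as in \eqref{LagMult}, which is smooth (again because $|\I_m|\neq 0$) hence locally Lipschitz, so
\[
|\lambda(t_k+\epsilon)-\lambda_{k+1}|=|\lambda(\om(t_k+\epsilon))-\lambda(\om^{k+1})|\leq C\,|\om(t_k+\epsilon)-\om^{k+1}|\sim O(\epsilon^{p+1}).
\]
Hence \eqref{Discretization} is a $(p,p)$ integrator of \eqref{EuPoiSus}. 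Since the discrete reconstruction has the required Cayley form $\Om_{k+1}=\Om_k\,\mbox{cay}(\epsilon\,\hat\om_k)$, item 2 of theorem \ref{TeoL} then yields that a $(p,s)$ integrator of \eqref{EuPoiSus} generates at least a $(\mbox{min}(p,1),s)$ integrator of \eqref{LdAeqs}; taking $s=p$ gives the claimed order and finishes the proof.

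There is no essential obstacle once theorem \ref{TeoL} is available: the content is precisely that the reduced problem on $SO(3)$ splits into the genuine ODE \eqref{ContDyn} plus the explicit algebraic relation \eqref{LagMult}, so any classical one-step method inherits order $p$ simultaneously in $\om$ and in $\lambda$. The only points needing a little care are confirming that prescribing $l_1,l_2$ freely does not spoil the regularity condition \eqref{REGLOC}, and using the continuous identity $\lambda(t)=\lambda(\om(t))$ directly rather than differentiating the reconstruction equation; both follow at once from the decoupling established in \S\ref{SusSO3}.
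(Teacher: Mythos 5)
Your proposal is correct and follows essentially the same route as the paper: the dynamical bound is immediate from the hypothesis, the multiplier bound comes from the smooth dependence of $\lambda(\cdot)$ in \eqref{LagMult} on $\om$ (the paper uses a first-order Taylor expansion where you use a local Lipschitz estimate, which is the same thing), and the unreduced statement is obtained by invoking item 2 of theorem \ref{TeoL}. The extra remarks on exact preservation of $\omega_3=0$ and on the regularity condition \eqref{REGLOC} are sound but not needed beyond what the paper already assumes.
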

\begin{proof}
The dynamical part is obvious, namely the order $|\om(t_k+\epsilon)-\om^{k+1}|\sim O(\epsilon^{p+1})$ is given by assumption. On the other hand, given the choice $\lambda_{k+1}=\lambda(\om^{k+1})$ according to \eqref{LagMult} and the $\om$ consistency bound, using the Taylor expansion of $\lambda$ we have
\[
\begin{split}
|\lambda_{k+1}-\lambda(t_k+\epsilon)|&=|\lambda(\om^{k+1})-\lambda(\om(t_k+\epsilon))|\\
&=|\lambda(\om(t_k+\epsilon)+O(\epsilon^{p+1}))-\lambda(\om(t_k+\epsilon))|\\
&=|O(\epsilon^{p+1})\nabla\lambda(\om(t_k))+O(\epsilon^{p+2})|\sim O(\epsilon^{p+1}),
\end{split}
\]
i.e. a $(p,p)$ order discretization of \eqref{SusPro}.
Now, the $(1,p)$ order of  consistency for the unreduced problem follows directly from Proposition \ref{TeoL}.
\end{proof}
\begin{remark}\label{RemarkLambda}
{\rm
The previous result may be refined concerning the algebraic part,  generating a  $(p,s)$ integrator for \eqref{SusPro} with $s>p$. For this purpose, we set
\[
\lambda_{k+1}=\lambda(\om^{k+1})+l_{\lambda}(\om^k,\om^{k+1},\epsilon)
\]
in \eqref{Discretization}, with $l_{\lambda}$ chosen as shown next. In the new scenario, we have (where we omit the $l_{\lambda}$ arguments for the sake of simplicity)
\[
\begin{split}
&\,\,|\lambda_{k+1}-\lambda(t_k+\epsilon)|=|\lambda(\om^{k+1})-\lambda(\om(t_k+\epsilon))+l_{\lambda}|\\
&=|\lambda(\om^{k+1})-\lambda(\om^{k+1}+O(\epsilon^{p+1}))+l_{\lambda}|\\
&=|O(\epsilon^{p+1})D\,\lambda(\om^{k+1})+O(\epsilon^{2p+2})D^2\lambda(\om^{k+1})+O(\epsilon^{3p+3})D^3\lambda(\om^{k+1})+\cdot\cdot\cdot+O(\epsilon^{s+1})+l_{\lambda}|,
\end{split}
\]
where we take the Taylor expansion of $\lambda(\om^{k+1}+O(\epsilon^{p+1}))$ up to the term $\epsilon^{s+1}$ ($s=n\,p+n-1$, with $n$ an integer). Thus, it is obvious that choosing $l_{\lambda}$ such that
\[
O(\epsilon^{p+1})\nabla\lambda(\om^{k+1})+O(\epsilon^{2p+2})\nabla^2\lambda(\om^{k+1})+O(\epsilon^{3p+3})\nabla^3\lambda(\om^{k+1})+\cdot\cdot\cdot+O(\epsilon^{s})+l_{\lambda}=0,
\]
the claim follows.
}
\end{remark}

\subsection{Variational nonholonomic integrators}

As presented in \S\ref{VIRS}, the variational integrator setting provides  a framework for the numerical integration of reduced systems. In particular, corollary \ref{CoroRetr} prescribes a particular DSP, namely,
\begin{equation}\label{VSO3}
\begin{split}
(\mbox{d}\tau^{-1}_{\epsilon\omega^{k+1}})^{*}\,\tilde l_{d}^{\prime}(\omega^{k+1})-(\mbox{d}\tau^{-1}_{-\epsilon\omega^{k}})^{*}\,\tilde l_{d}^{\prime}(\omega^{k})&=\lambda_{k+1}\,e_3,\\ 
\tilde{\varphi}_d(\omega^{k+1})&=0.
\end{split}
\end{equation}
Regarding the discrete constraints $\tilde\varphi_d$, although we have some freedom for their choice, $\tilde\varphi_d(\cdot)=\bra e_3,\cdot\ket$ is the most suitable, since it implies  preservation of the constraint of the reduced system.  In particular, for the Cayley map on $SO(3)$, $\omega_3 = 0$ implies:
\begin{equation}\label{cayinver}
\mbox{d}\ca_{\epsilon\omega}^{-1}=\lp\begin{array}{ccc}
1+\frac{\epsilon^2}{4}\omega_1^2 & \frac{\epsilon^2}{4}\omega_1\omega_2 & -\frac{\epsilon}{2}\omega_2\\
\frac{\epsilon^2}{4}\omega_1\omega_2 & 1+\frac{\epsilon^2}{4}\omega_2^2 & \frac{\epsilon}{2}\omega_1\\
\frac{\epsilon}{2}\omega_2  & -\frac{\epsilon}{2}\omega_1 & 0
\end{array}\rp.
\end{equation}

Setting $\tilde l_d(\hat\omega)=\epsilon\,l(\hat\omega)$ as a first order approximation of the action $s(\hat\omega)=\int_{t_1}^{t_1+\epsilon}l(\hat\omega)\,dt$, where $l:\alg\Flder\R$ is given by \eqref{RedSusLag}, and applying \eqref{VSO3} with $\tilde\varphi_d(\cdot)=\bra e_3,\cdot\ket$, $\tau$ beeing the Cayley map, we obtain the following algorithm for the Suslov Problem:
\begin{equation}\label{taudyn}
\begin{split}
\I_m\lp\begin{array}{c}
\omega_1^{k+1}-\omega_1^{k}\\
\omega_2^{k+1}-\omega_2^{k}
\end{array}\rp&+\frac{\epsilon}{2}\lp
\begin{array}{c}
\,\,\,\omega_2^{k+1}(\I_{3i}\omega_i^{k+1})+\omega_2^{k}(\I_{3i}\omega_i^{k})\\
-\omega_1^{k+1}(\I_{3i}\omega_i^{k+1})-\omega_1^{k}(\I_{3i}\omega_i^{k})
\end{array}
\rp\\
&+\frac{\epsilon^2}{4}\lp
\begin{array}{cc}
(\omega_1^{k+1})^2 &\omega_1^{k+1}\omega_2^{k+1}\\
\omega_1^{k+1}\omega_2^{k+1}& (\omega_2^{k+1})^2
\end{array}
\rp\,\I_m\,\lp\begin{array}{c}
\omega_1^{k+1}\\
\omega_2^{k+1}
\end{array}\rp\\
&-\frac{\epsilon^2}{4}\lp
\begin{array}{cc}
(\omega_1^{k})^2 &\omega_1^{k}\omega_2^{k}\\
\omega_1^{k}\omega_2^{k}& (\omega_2^{k})^2
\end{array}
\rp\,\I_m\,\lp\begin{array}{c}
\omega_1^{k}\\
\omega_2^{k}
\end{array}\rp=0,
\end{split}
\end{equation}

\begin{equation}\label{taualg}
\lambda_{k+1}=\frac{1}{2}\lp \omega_1^{k+1}(\I_{2i}\omega_i^{k+1})+\omega_1^{k}(\I_{2i}\omega_i^{k})\rp-\frac{1}{2}\lp \omega_2^{k+1}(\I_{1i}\omega_i^{k+1})+\omega_2^{k}(\I_{1i}\omega_i^{k})\rp,
\end{equation}
where the rescaling $\lambda_{k+1}\mapsto -\lambda_{k+1}/\epsilon^2$ has been introduced. This rescaling may be understood in the context of the construction of the variational integrator using a discretization map $\rho:SO(3)\times SO(3)\Flder TSO(3)$. More concretely, it can be shown \cite{JiSch} that  any discretization $\phi\propto \mu\circ\rho$ preserves the constraint for the discrete flow $(\tilde R_k,\dot{\tilde{R}}_k)\mapsto (\tilde R_{k+1},\dot{\tilde{R}}_{k+1})$, not only $\phi= \mu\circ\rho$ as stated in Proposition \ref{Preservation} (note that we consider the redefinition $\tilde R_k$ of the discrete nodes as prescribed as well by that proposition). Thus, setting $\phi=\frac{-1}{\epsilon ^2} \mu\circ\rho$ accounts for the mentioned rescaling.

Concerning the order of consistency w.r.t. the continuous Suslov problem of the discretization prescribed by the algorithm in \eqref{taudyn} and  \eqref{taualg}, we prove the following result.
\begin{proposition}\label{PropoFinal}
The numerical method \eqref{taudyn}, \eqref{taualg}, is consistent of order  $(2,\star)$  (Definition \ref{RedConsis}) with respect to the Suslov problem, and consequently of order $(1,\star)$ (Definition \ref{UnredConsis}) with respect to \eqref{LdAeqs}. Here by $\star$ we mean that the method is {\rm not} neccessarily consistent  concerning the multipliers. 
\end{proposition}
\begin{proof}
To prove this result, we just consider the Taylor expansion $\omega(t_k+\epsilon)=\omega(t_k)+\epsilon\,\dot\omega(t_k)+\frac{\epsilon^2}{2}\ddot\omega(t_k)+O(\epsilon^3)$, where $\dot\omega$ is given in \eqref{ContDyn}, and compare, order by order, with $\omega^{k+1}$ provided by \eqref{taudyn}. In first place, we calculate the second order time derivatives, namely
\[
\begin{split}
\ddot\omega_1=&-\frac{1}{|\I_m|^2}\lc \I_{22}(\I_{3i}\omega_i)^2(\I_{21}\omega_2+\I_{11}\omega_1)-\I_{12}(\I_{3i}\omega_i)^2(\I_{22}\omega_2+\I_{12}\omega_1)\rc\\
&-\frac{1}{|\I_m|^2}\,\I_{22}\,\omega_2\,(\I_{3i}\omega_i)\lc-\I_{31}(\I_{22}\omega_2+\I_{12}\omega_1)+\I_{32}(\I_{22}\omega_2+\I_{12}\omega_1)\rc\\
&-\frac{1}{|\I_m|^2}\,\I_{12}\,\omega_1(\I_{3i}\omega_i)\lc-\I_{31}(\I_{22}\omega_2+\I_{12}\omega_1)+\I_{32}(\I_{22}\omega_2+\I_{12}\omega_1)\rc,
\end{split}
\]
and
\[
\begin{split}
\ddot\omega_2=&\,\,\,\,\,\,\,\frac{1}{|\I_m|^2}\lc \I_{21}(\I_{3i}\omega_i)^2(\I_{21}\omega_2+\I_{11}\omega_1)-\I_{11}(\I_{3i}\omega_i)^2(\I_{22}\omega_2+\I_{12}\omega_1)\rc\\
&+\frac{1}{|\I_m|^2}\,\I_{21}\,\omega_2\,(\I_{3i}\omega_i)\lc-\I_{31}(\I_{22}\omega_2+\I_{12}\omega_1)+\I_{32}(\I_{21}\omega_2+\I_{11}\omega_1)\rc\\
&+\frac{1}{|\I_m|^2}\,\I_{11}\,\omega_1(\I_{3i}\omega_i)\lc-\I_{31}(\I_{22}\omega_2+\I_{12}\omega_1)+\I_{32}(\I_{21}\omega_2+\I_{11}\omega_1)\rc.
\end{split}
\]
On the other hand, the first two terms in \eqref{taudyn} imply that $\omega^{k+1}=\omega^k+O(\epsilon)$ (ensuring consistency) and, moreover, that 
\begin{equation}\label{forder}
\lp\begin{array}{c}
\omega_1^{k+1}\\
\omega_2^{k+1}
\end{array}
\rp=\lp\begin{array}{c}
\omega_1^{k}\\
\omega_2^{k}
\end{array}
\rp+\epsilon\,\I_m^{-1}\lp
\begin{array}{c}
-\omega_2^k(\I_{3i}\omega_i^k)\\
\omega_1	^k(\I_{3i}\omega_i^k)
\end{array}
\rp,
\end{equation}
which implies first order consistency in view of  \eqref{ContDyn}. Futhermore, this implies that the third and fourth term in \eqref{taudyn} cancel out at $O(\epsilon^2)$ order; thus, we 	realize that the relevant term in \eqref{taudyn} at this order is just
\begin{equation}\label{sorder}
\frac{\epsilon^2}{2}\,\I_m^{-1}\,\lp\begin{array}{c}
-\omega_2^{k+1}(\I_{3i}\omega_i^{k+1})\\
\omega_1^{k+1}(\I_{3i}\omega_i^{k+1})
\end{array}
\rp.
\end{equation}
Using \eqref{forder}, we obtain
\[
\begin{split}
-\omega_2^{k+1}(\I_{3i}\omega_i^{k+1})&=-\frac{1}{|\I_m|}(\I_{3i}\omega_i^k)^2(\I_{21}\omega_2^k+\I_{11}\omega_1^k)\\
&-\frac{1}{|\I_m|}\omega_2^k(\I_{3i}\omega_i^k)\lc-\I_{31}(\I_{22}\omega_2^k+\I_{12}\omega_1^k)+\I_{32}(\I_{21}\omega_2^k+\I_{11}\omega_1^k)\rc+O(\epsilon),
\end{split}
\]
\[
\begin{split}
\omega_1^{k+1}(\I_{3i}\omega_i^{k+1})&=-\frac{1}{|\I_m|}(\I_{3i}\omega_i^k)^2(\I_{22}\omega_2^k+\I_{12}\omega_1^k)\\
&+\frac{1}{|\I_m|}\omega_1^k(\I_{3i}\omega_i^k)\lc-\I_{31}(\I_{22}\omega_2^k+\I_{12}\omega_1^k)+\I_{32}(\I_{21}\omega_2^k+\I_{11}\omega_1^k)\rc+O(\epsilon).
\end{split}
\]
Plugging these terms into \eqref{sorder}, we obtain 
\[
\begin{split}
&-\frac{(\epsilon^2/2)}{|\I_m|^2}\lc \I_{22}(\I_{3i}\omega_i^k)^2(\I_{21}\omega_2^k+\I_{11}\omega_1^k)-\I_{12}(\I_{3i}\omega_i^k)^2(\I_{22}\omega_2^k+\I_{12}\omega_1^k)\rc\\
&-\frac{(\epsilon^2/2)}{|\I_m|^2}\,\I_{22}\,\omega_2^k\,(\I_{3i}\omega_i^k)\lc-\I_{31}(\I_{22}\omega_2^k+\I_{12}\omega_1^k)+\I_{32}(\I_{22}\omega_2^k+\I_{12}\omega_1^k)\rc\\
&-\frac{(\epsilon^2/2)}{|\I_m|^2}\,\I_{12}\,\omega_1^k(\I_{3i}\omega_i^k)\lc-\I_{31}(\I_{22}\omega_2^k+\I_{12}\omega_1^k)+\I_{32}(\I_{22}\omega_2^k+\I_{12}\omega_1^k)\rc,
\end{split}
\]
and
\[
\begin{split}
&\,\,\,\,\,\,\frac{(\epsilon^2/2)}{|\I_m|^2}\lc \I_{21}(\I_{3i}\omega_i^k)^2(\I_{21}\omega_2^k+\I_{11}\omega_1^k)-\I_{11}(\I_{3i}\omega_i^k)^2(\I_{22}\omega_2^k+\I_{12}\omega_1^k)\rc\\
&+\frac{(\epsilon^2/2)}{|\I_m|^2}\,\I_{21}\,\omega_2^k\,(\I_{3i}\omega_i^k)\lc-\I_{31}(\I_{22}\omega_2^k+\I_{12}\omega_1^k)+\I_{32}(\I_{21}\omega_2^k+\I_{11}\omega_1^k)\rc\\
&+\frac{(\epsilon^2/2)}{|\I_m|^2}\,\I_{11}\,\omega_1^k(\I_{3i}\omega_i^k)\lc-\I_{31}(\I_{22}\omega_2^k+\I_{12}\omega_1^k)+\I_{32}(\I_{21}\omega_2^k+\I_{11}\omega_1^k)\rc.
\end{split}
\]
Now, substraction from the expressions for $\ddot\omega_1,\ddot\omega_2$ presented above, we obviously have $|\omega(t_k+\epsilon)-\omega^{k+1}|\sim O(\epsilon^3)$. Furthermore, the factors $\frac{1}{4}$  in \eqref{taudyn} prevent the $O(\epsilon^3)$ terms on the discrete and continuous sides to coincide. Regarding the multipliers, it is straightforward to see from \eqref{taualg} that $\lambda_{k+1}= \omega_1^{k}(\I_{2i}\omega_i^{k})- \omega_2^{k}(\I_{1i}\omega_i^{k})+O(\epsilon)$, while $\lambda(t_k+\epsilon)=\lambda(\omega^k)+O(\epsilon)$, where $\lambda(\omega)$ is determined by \eqref{LagMult}. Hence, we see that $\lambda(t_k+\epsilon)-\lambda_{k+1}=O_0+O(\epsilon)$ with 
\[
O_0=\frac{\I_{3i}\om_i^k}{|\I_m|}\lp(\I_{32}\I_{21}-\I_{31}\I_{22})\om_2^k+(\I_{32}\I_{11}-\I_{31}\I_{12})\om_1^k\rp,
\]
which is different from zero, in general, and consequently the discrete multiplier is not consistent with the continuous one. This shows the first claim. Concerning the second, it suffices to apply Proposition \ref{TeoL}.
\end{proof}
Therefore, the variational integrator setting generates a second-order consistent method on the {\it dynamical} side (a fact which is interesting, since we are considering a first-order consistent discrete Lagrangian $\tilde l_d$, and therefore we might expect a first-order consistent numerical method in the spirit of \cite{MarsdenWest}) while it is not neccessarily consistent on the {\it algebraic} side. Needless to say, this is a drawback of the numerical scheme. However, due to the decoupling between the two parts mentioned above (we can obtain the $\omega^{k+1}$ values independently of the $\lambda_{k+1}$'s), we may perform, besides the $\epsilon^2$ rescaling, a discrete shift of $\lambda(\omega^{k+1})$ as described in remark \ref{RemarkLambda}, generating in consequence a $(2,s)$ method for \eqref{SusPro}, and therefore a ($1,s$) method for \eqref{LdAeqs}. Unfortunately, such a shift cannot be understood in general as an alternate choice of the discretization map $\rho$, if this map is not just a rescaling  but involves nonlinear terms depending on the $\om$ variables.

\medskip

We illustrate these facts by the following discussion and plots. Before going into details, we point out that the reduced energy $E_l(\xi)=\bra\frac{\der l}{\der\xi},\,\xi\ket-l(\xi)\Big|_{\xi\in\dal}$ is preserved  along the solutions of the Euler-Poincar\'e-Suslov equations (see \cite{FeZen}). In our case, the reduced energy along the solutions reads $E_l(\hat\omega)=\frac{1}{2}\bra\I\hat\omega,\hat\omega\ket=\omega_1(\I_{1i}\omega_i)+\omega_2(\I_{2i}\omega_i)$ (with $i=\lc1,2\rc$). The preservation of this energy should be taken into account as a favorable property  of nonholonomic integrators, as shown below.

We consider a homogeneous rigid body with inertia matrix $\I=\lp\begin{array}{ccc}
1&0.1&0.2\\
0.1&1&0.2\\
0.2&0.1&1
\end{array}\rp$ and initial values $\omega_1(0)=0.4$ and $\omega_2(0)=0.5$ (w.r.t. proper unities). We shall display the performance of an order (2,1) $\overline{\mbox{DSP}}$ of type \eqref{Discretization}, and the variational nonholonomic integrator \eqref{taudyn} and \eqref{taualg}, which is also order 2 w.r.t.  the dynamical variables as proved in proposition \ref{PropoFinal}. More concretely, the integrator $\overline{\mbox{DSP}}$ corresponds to the midpoint rule, namely we have  $l_1(\omega^k,\omega^{k+1})=-\lp\frac{\omega_2^{k+1}+\omega_2^{k}}{2}\rp(\I_{3i}\lp\frac{\omega_i^{k+1}+\omega_i^{k}}{2}\rp)$ and
$l_2(\omega^k,\omega^{k+1})=\lp\frac{\omega_1^{k+1}+\omega_1^{k}}{2}\rp(\I_{3i}\lp\frac{\omega_i^{k+1}+\omega_i^{k}}{2}\rp)$ in \eqref{Discretization}, whe\-re we recall that $i=\lc1,2\rc$ (note as well that in this case there is no $\epsilon$ dependence of $l_1,l_2$). In order to achieve order 2 consistency with respect to \eqref{LagMult}, we determine $\lambda_{k+1}$ according to \eqref{Discretization}.

On the other hand, the variational nonholonomic integrator \eqref{taudyn} and \eqref{taualg} corresponds to the setup $\tilde l_d(\hat\omega)=\epsilon\,l(\hat\omega)$, $\tilde\varphi_d(\hat\omega)=\omega_3$ and the retraction map $\tau=$ cay. What we observe is that, for small time steps $\epsilon$, the behavior of both integrators is indistinguishable except with respect to the multipliers. As to be expected due to proposition \ref{PropoFinal}, the nonholonomic variational integrator produces an inconsistent  discretization of the Lagrange multipliers. However, we display also the performance of both integrators over a big time interval, noticing that here the variational integrator's performance is much better, mainly with respect to the preservation of energy, where we observe a fast decay in case of the midpoint rule. Moreover, we observe that the variational integrator even works quite well if the step size of the time discretization is taken to be relatively big.

\begin{figure}[H]
\begin{minipage}{.49\linewidth}
\hspace{-4cm} \includegraphics[width=142mm,height=160mm]{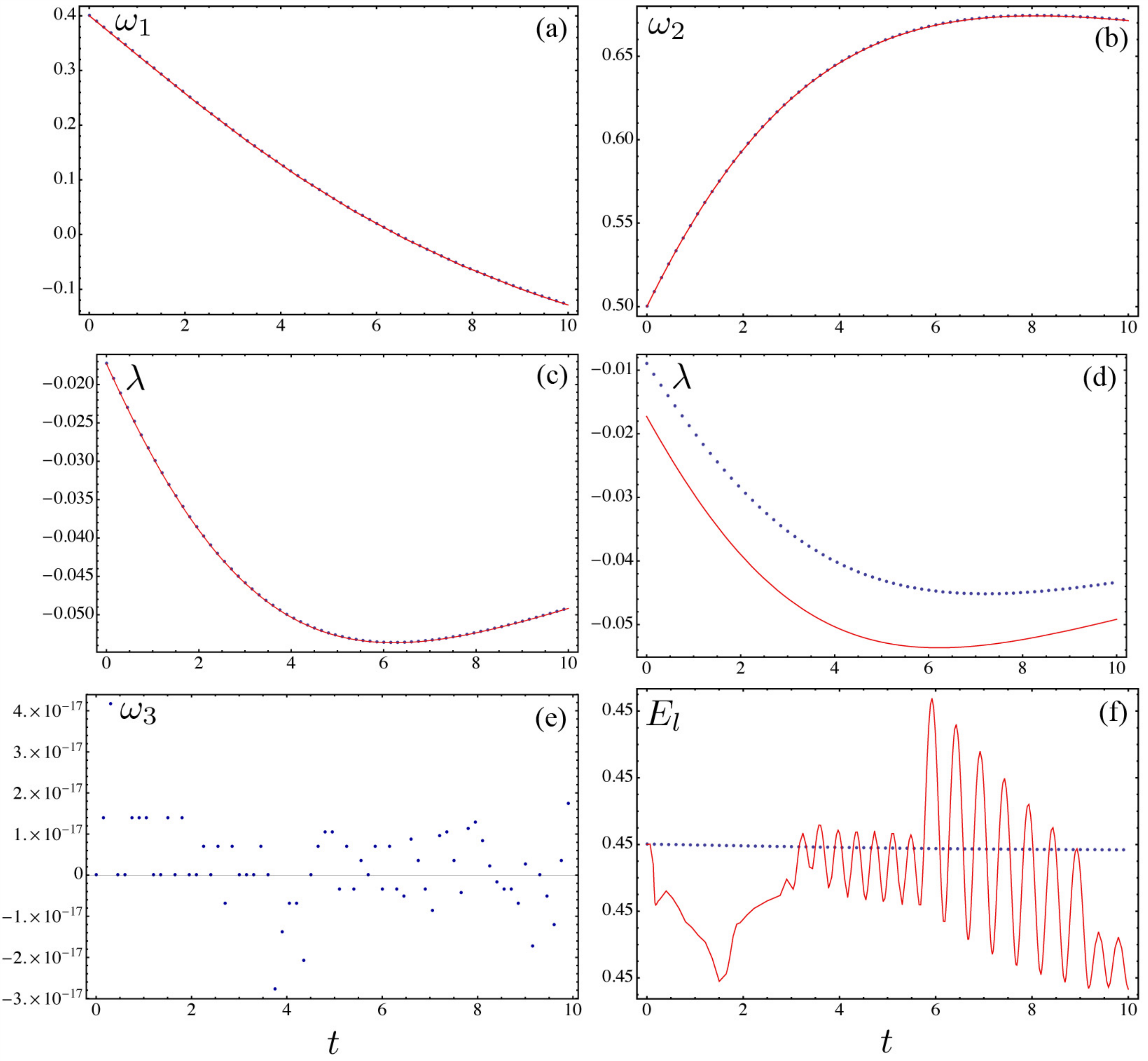}
\end{minipage}
\caption{In this figure we display the performance of the midpoint rule ($\overline{\mbox{DSP}}(\omega^k,\lambda_{k+1};\omega^{k+1})=0$, with inertia matrix $\I$ and initial values $\omega_1(0)$ and $\omega_2(0)$ introduced above) for the nonholonomic rigid body with a time step of size  $\epsilon=10^{-3}$. The solid red line is obtained through a RK4 integrator (which we consider an accurate approximation of the continuous nonlinear dynamics over a short time interval), while the blue dots represent the performance of the midpoint rule. The plots $(a)$ and $(b)$ correspond to the dynamical variables $\omega_1$, $\omega_2$, while $(c)$ displays the Lagrange multipliers $\lambda.$ On the other hand $(d)$ shows the inconsistent multipliers generated by the nonholonomic variational integrator. Finally, $(e)$ and $(f)$ show the preservation of the constraints and the energy $E_l(\hat\omega)$ up through round off errors, respectively.}
\label{Fig1}
\end{figure}

\begin{figure}[H]
\begin{minipage}{.49\linewidth}
\hspace{-4cm} \includegraphics[width=142mm,height=160mm]{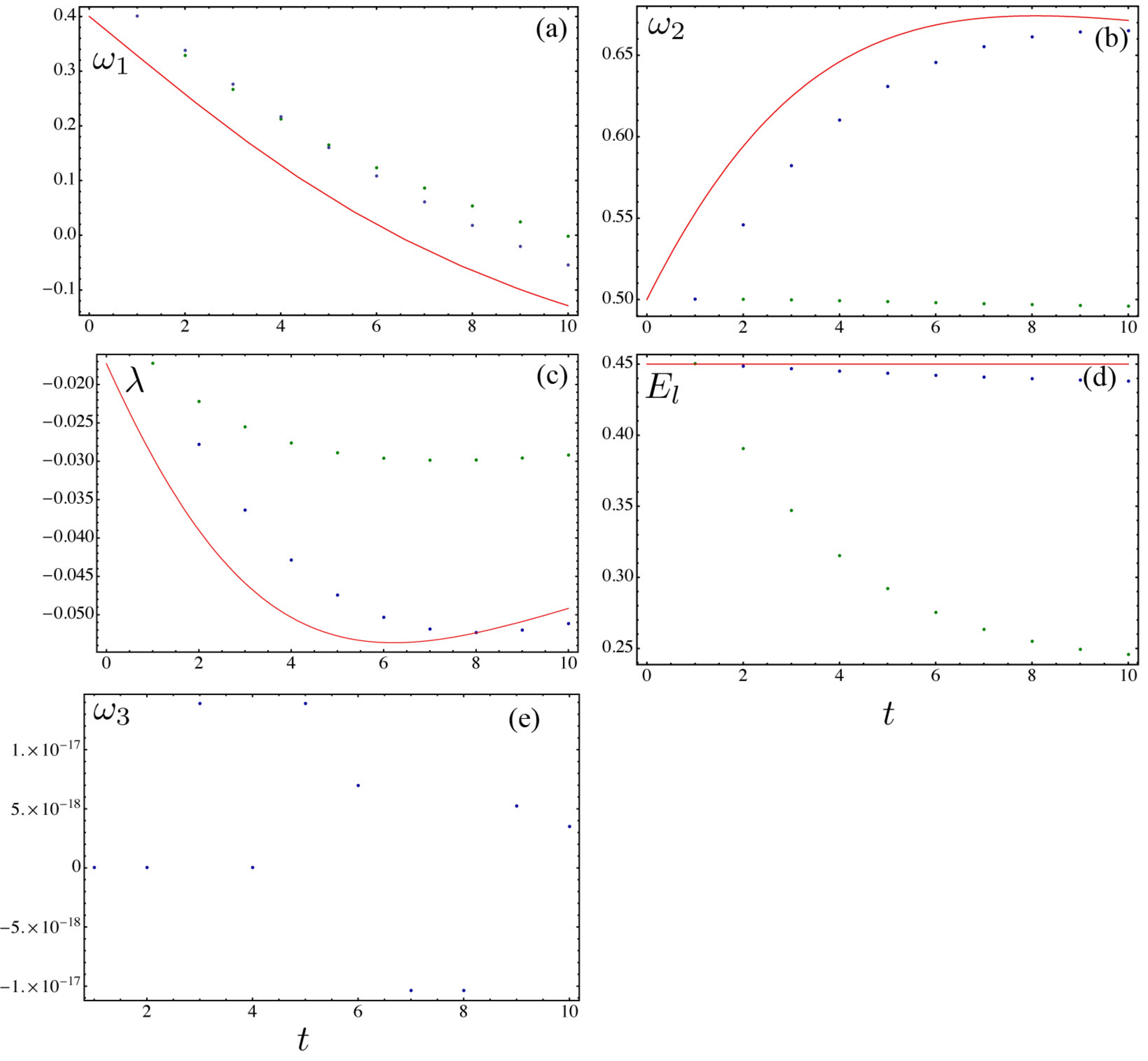}
\end{minipage}
\caption{This figure displays the comparison between the midpoint rule (the same as in Figure \ref{Fig1}) and the variational integrator \eqref{taudyn}, \eqref{taualg}, for a time step of size  $\epsilon=10^0=1$ (we recall that this integrator is also order 2 consistent in the dynamical variables). The former is represented by the green points and the latter by the blue ones, while the solid red line still represents the performance of a RK4 method. Variables $\omega_1$ $(a)$, $\omega_2$ $(b)$, $\lambda$ $(c)$ and $E_l$ $(d)$ are displayed, while $(e)$ shows the preservation of the constraints by the variational integrator up through  round off errors. We observe a better performance of the variational integrator, mainly with respect to the preservation of energy, a fact which, considering  bigger time steps, leads to the conclusion that its convergence to the actual solution is much faster and its long-term behavior is much more accurate.}
\label{Fig2}
\end{figure}

\section{Discretization of the Euler-Poincar\'e-Suslov problem as perturbation}\label{Pertur}
So far, we have focused on the discretization of the Suslov problem, paying attention to its consistency order and $\algd-$pre\-ser\-va\-tion. In this section we return to any Lie group $G$ and its Lie algebra $\al$, since the relevant results from above apply in this general case as well; in the last part we will particularize $G=SO(3)$.

As it is well-known, by discretizing the dynamics we introduce some discrepancies with respect to the continuous system even when the discretization is performed
in some kind of structure-preserving fashion. It is needless to mention that this is a
central and fundamental question for all kinds of numerical investigations, especially
concerning the long-term evolution of dynamical systems. Regarding this issue,
we refer to \cite{FiSch} where a positive answer to the following question is given: Is it
possible to embed a numerical scheme approximating the continuous-time 
flow of
a set of autonomous ordinary differential equations (ODE) into the time evolution
corresponding to a non-autonomous perturbation of the original autonomous ODE?
The positive result may be phrased as: {\it Any $p-$th order discretization of an autonomous ODE
can equivalently be viewed as the time$-\epsilon$ period map of a suitable $\epsilon-$periodic non-autonomous perturbation of the original ODE (where $\epsilon$ is the fixed step size of the discretization).}  The precise statement is:

\begin{theorem}\label{TheoPer}
Suposse that $h\in C^r(\R^n,\R^n)$, $r\geq 1$, and consider the autonomous ODE
\begin{equation}\label{Eq1}
\dot x=h(x).
\end{equation}
Let $F(t,x)$ be the  solution flow of \eqref{Eq1} satisfying $F(0,x)=0,$ and assume that there are an integer $\rho\geq 1$, a continuous function $C:[0,\infty)\Flder [0,\infty)$ and a one-step difference approximation of step size $\epsilon$
\[
x_{k+1}=\phi(\epsilon,x_k),\,\,\, (0<\epsilon\leq\epsilon_0; \, k\in\Z)
\]
which is consistent of order $p$, i.e.
\[
|\phi(\epsilon,x)-F(\epsilon,x)|\leq C(|x|)\,\epsilon^{p+1}.
\]
Then, there exists a function $d(\epsilon,t/\epsilon,x)$, as smooth as $h$ and periodic in $t$ of period $\epsilon$, such that if $G(t,s;\epsilon,x)$\footnote{Do not confuse with the Lie group $G$.}, $G(s,s;\epsilon,x)=x$, is the  solution flow of the non-autonomous, $\epsilon-$periodic ODE
\begin{equation}\label{Eq2}
\dot x=h(x)+\epsilon^pd(\epsilon,t/\epsilon,x),
\end{equation}
then
\[
G(\epsilon,0;\epsilon,x)=\phi(\epsilon,x),
\]
where $G(\epsilon,0;\epsilon,\cdot):\R^n\Flder\R^n$ is the Poincar\'e map (period map) for \eqref{Eq2}, corresponding to initial time $s=0$.
\end{theorem}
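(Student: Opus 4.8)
I would follow the strategy of Fiedler and Scheurle \cite{FiSch}: within a single step, interpolate explicitly between the exact flow $F$ of \eqref{Eq1} and the numerical map $\phi(\epsilon,\cdot)$ through an $\epsilon$-dependent isotopy of diffeomorphisms, and then read off the (necessarily time-periodic) vector field that generates this isotopy. Concretely, I would fix a smooth cut-off $\chi:\R\to[0,1]$ with $\chi\equiv 0$ on $(-\infty,1/4]$ and $\chi\equiv 1$ on $[3/4,\infty)$, and set, for $0\le t\le\epsilon$,
\[
\Phi(t,x)=\bigl(1-\chi(t/\epsilon)\bigr)\,F(t,x)+\chi(t/\epsilon)\,F\bigl(t-\epsilon,\phi(\epsilon,x)\bigr).
\]
By construction $\Phi(0,x)=x$ and $\Phi(\epsilon,x)=\phi(\epsilon,x)$. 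Moreover, near $t=0$ one has $\Phi(t,\cdot)=F(t,\cdot)$, and near $t=\epsilon$, using the flow property, $\Phi(t,\cdot)=F(t-\epsilon,\cdot)\circ\phi(\epsilon,\cdot)$; in both regimes $t\mapsto\Phi(t,\cdot)$ is exactly the flow of $h$.

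Next I would verify that $\Phi(t,\cdot)$ is a diffeomorphism for $\epsilon$ small. From the consistency estimate $|\phi(\epsilon,x)-F(\epsilon,x)|\le C(|x|)\epsilon^{p+1}$ and smooth dependence of $F$ on initial data one gets $F(t-\epsilon,\phi(\epsilon,x))-F(t,x)=O(\epsilon^{p+1})$ uniformly for $t\in[0,\epsilon]$, so $\Phi(t,\cdot)=F(t,\cdot)+O(\epsilon^{p+1})$ is a $C^1$-small perturbation of the diffeomorphism $F(t,\cdot)$ and hence invertible, with $\Phi(t,\cdot)^{-1}=F(t,\cdot)^{-1}+O(\epsilon^{p+1})$. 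I would then define the time-dependent vector field having $\Phi$ as its flow, $\widetilde h(t,y):=\partial_t\Phi\bigl(t,\Phi(t,\cdot)^{-1}(y)\bigr)$, so that by uniqueness for the initial value problem $G(t,0;\epsilon,\cdot)=\Phi(t,\cdot)$; in particular $G(\epsilon,0;\epsilon,x)=\Phi(\epsilon,x)=\phi(\epsilon,x)$, which is the desired conclusion, provided $\widetilde h$ has the form claimed in \eqref{Eq2}.

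To get that form I would estimate $\widetilde h-h$. Differentiating $\Phi$ in $t$, the $t$-derivative of the correction term produces a factor $\epsilon^{-1}\chi'(t/\epsilon)$ acting on an $O(\epsilon^{p+1})$ quantity, so $\partial_t\Phi(t,x)=h(F(t,x))+O(\epsilon^{p})$; composing with $\Phi(t,\cdot)^{-1}=F(t,\cdot)^{-1}+O(\epsilon^{p+1})$ gives $\widetilde h(t,y)=h(y)+O(\epsilon^{p})$. Hence $d(\epsilon,t/\epsilon,y):=\epsilon^{-p}\bigl(\widetilde h(t,y)-h(y)\bigr)$ is bounded, and it is $\epsilon$-periodic in $t$ precisely because $\chi$ is locally constant near $0$ and near $1$: there $\Phi$ coincides with the exact $h$-flow, so $d$ together with all its $t$-derivatives vanishes at $t=0$ and $t=\epsilon$, and its periodic extension is as smooth as $h$. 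Then $\dot y=\widetilde h(t,y)=h(y)+\epsilon^{p}d(\epsilon,t/\epsilon,y)$ is exactly \eqref{Eq2}, and $G(\epsilon,0;\epsilon,\cdot)=\phi(\epsilon,\cdot)$ follows.

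\textbf{Main obstacle.} The one genuinely delicate point — beyond the routine $O(\cdot)$ bookkeeping — is the regularity accounting: the consistency hypothesis is stated only as a $C^{0}$ bound, whereas running the diffeomorphism/perturbation argument requires $\phi(\epsilon,\cdot)-F(\epsilon,\cdot)$ to be small in $C^{r}$ (so that $\Phi(t,\cdot)$ and $\Phi(t,\cdot)^{-1}$ are controlled and $d$ inherits the asserted smoothness). For a genuine one-step method this is harmless, since $\phi$ is jointly $C^{r}$ in $(\epsilon,x)$ and order-$p$ consistency then forces $\phi(\epsilon,x)-F(\epsilon,x)=\epsilon^{p+1}\widetilde\psi(\epsilon,x)$ with $\widetilde\psi$ of the appropriate regularity (Taylor expansion in $\epsilon$), after which all the estimates above can be carried in the $C^{r}_{\mathrm{loc}}$ topology; I would either strengthen the hypothesis accordingly or insert this Taylor argument. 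A secondary point to watch is the domain of definition: for $t\in[0,\epsilon]$ the term $F(t-\epsilon,\cdot)$ uses the backward flow, so one restricts to $\epsilon\le\epsilon_{0}$ small enough that all flows and inverses involved are defined on the relevant compact sets, exactly as is implicit in the assumption that $\phi(\epsilon,\cdot)$ is a one-step method for $0<\epsilon\le\epsilon_{0}$.
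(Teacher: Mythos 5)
The paper does not prove this theorem: it is quoted verbatim (typos included, e.g.\ ``$F(0,x)=0$'' for ``$F(0,x)=x$'' and the stray ``$\rho\geq 1$'') from Fiedler--Scheurle \cite{FiSch} and used as a black box. So there is no in-paper proof to compare against; measured against the cited source, your proposal reconstructs essentially the right argument. The cut-off isotopy $\Phi(t,x)=(1-\chi(t/\epsilon))F(t,x)+\chi(t/\epsilon)F(t-\epsilon,\phi(\epsilon,x))$ has the correct endpoint values, coincides with the exact $h$-flow near $t=0$ and $t=\epsilon$ (which is precisely what makes $d$ vanish near the ends of the period and hence extend $\epsilon$-periodically with full smoothness), and the decisive accounting is right: the only term that is not $O(\epsilon^{p+1})$ is $\epsilon^{-1}\chi'(t/\epsilon)\bigl(F(t-\epsilon,\phi(\epsilon,x))-F(t,x)\bigr)=O(\epsilon^{p})$, which is exactly why the perturbation in \eqref{Eq2} enters at order $\epsilon^{p}$ and not $\epsilon^{p+1}$. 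Recovering the generator via $\widetilde h(t,y)=\partial_t\Phi(t,\Phi(t,\cdot)^{-1}(y))$ and invoking uniqueness for the IVP to get $G(\epsilon,0;\epsilon,\cdot)=\phi(\epsilon,\cdot)$ is the standard suspension step. Your ``main obstacle'' is also the genuine one: as literally stated the hypothesis is only a $C^{0}$ consistency bound, which does not by itself make $\Phi(t,\cdot)$ a diffeomorphism or give $d$ the claimed smoothness; one must assume joint $C^{r}$ regularity of $\phi$ in $(\epsilon,x)$ and upgrade the bound via Taylor/Hadamard to $\phi(\epsilon,x)-F(\epsilon,x)=\epsilon^{p+1}\widetilde\psi(\epsilon,x)$, exactly as you indicate and as is implicit in \cite{FiSch}. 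I see no gap beyond the caveats you have already flagged.
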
   
Following these ideas, in \cite{JiSch} the case of nonholonomic dynamics in $\R^n$ has been explored, and the following theorem has been  proved. In the context of the present work, i.e. when the configuration manifold is a Lie group, it has to be understood w.r.t. coordinates.  
\begin{theorem}\label{Per11}
Let the matrix $\lp\frac{\der^2L}{\der v^i_g\der v^j_g}\rp$ be  positive-definite. Then, any $D-$preserving    discretization of the nonholonomic equations \eqref{LdAeqs}, the consistency order of which is $p$ w.r.t. the dynamical variables,  can be embedded into the time evolution of a non-autonomous perturbation of  the following form: 
\begin{equation}\label{PerFin1}
\begin{split}
\dot g^i&=v^i_g+\epsilon^p\tilde d^i_g(\epsilon,t/\epsilon,g,v_g),\\
\frac{\der^2L}{\der v^i_g\der v^j_g}\dot v^j_g&=\frac{\der L}{\der g^i}+\lambda_{\alpha}\,\mu^{\alpha}_i(g)+\epsilon^p(\tilde d_{v_g})_i(\epsilon,t/\epsilon, g,v_g),\\
\mu^{\alpha}_i(g)v^i_g&=0.
\end{split}
\end{equation}
\end{theorem}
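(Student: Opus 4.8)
The plan is to transport the problem to the reduced phase space $D$, invoke Theorem~\ref{TheoPer} there, and then \emph{unfold} the periodic perturbation so obtained back into the ambient differential--algebraic format \eqref{PerFin1}. The first step rests on the fact, established in \cite{GeomTreat,RigidBodyDAE,JiSch}, that the hypothesis that $\lp\frac{\der^2L}{\der v^i_g\der v^j_g}\rp$ be non-degenerate and positive-definite is precisely what makes \eqref{LdAeqs} --- with the second-order condition $\dot g^i=v^i_g$ adjoined --- reducible to a genuine ODE $\dot z=X(z)$ on the $(2n-m)$-dimensional constraint manifold $D$, $z=(g,v_g)$: differentiating $\mu^\alpha_i(g)v^i_g=0$ along the motion and using that the constrained mass matrix $\mu^\alpha_i\,M^{ij}\,\mu^\beta_j$ is invertible ($M^{ij}$ being the entries of the inverse Hessian), one solves $\lambda_\alpha$ uniquely as a smooth function on $D$, and $X$ is as regular as $L$ permits. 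Since everything here is local (in the sense of Definition~\ref{pdisc}), I would work in a chart of $D$, extending $X$ to all of $\R^{2n-m}$ by a cutoff if one wishes to quote Theorem~\ref{TheoPer} verbatim.

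Next I would apply Theorem~\ref{TheoPer} to $\dot z=X(z)$ and to the given $p$-th order one-step discretization $z_{k+1}=\phi(\epsilon,z_k)$ of the nonholonomic ODE: it furnishes an $\epsilon$-periodic function $d(\epsilon,t/\epsilon,z)$, as smooth as $X$, such that the time-$\epsilon$ period map of $\dot z=X(z)+\epsilon^p d(\epsilon,t/\epsilon,z)$ equals $\phi(\epsilon,\cdot)$. The crucial point is that $d(\epsilon,t/\epsilon,\cdot)$ is a time-dependent vector field \emph{on} $D$, hence tangent to $D$; writing its components in the $(g,v_g)$-splitting as $(d_g,d_{v_g})$, tangency to $D$ reads
\[
\frac{\der\mu^\alpha_i}{\der g^j}\,v^i_g\,d^j_g+\mu^\alpha_i\,d^i_{v_g}=0.
\]

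Finally I would realize $X+\epsilon^p d$ through the DAE \eqref{PerFin1} by the choices $\tilde d^i_g:=d^i_g$ and $(\tilde d_{v_g})_i:=\frac{\der^2L}{\der v^i_g\der v^j_g}\,d^j_{v_g}$, keeping the \emph{exact} constraint $\mu^\alpha_i(g)v^i_g=0$ in place. One then checks that \eqref{PerFin1}, reduced to $D$ by the same procedure as its unperturbed counterpart, returns exactly $\dot z=X(z)+\epsilon^p d(\epsilon,t/\epsilon,z)$: differentiating the constraint along \eqref{PerFin1} and subtracting the unperturbed relation, the term $\mu^\alpha_i\,M^{ij}\,\epsilon^p(\tilde d_{v_g})_j$ collapses to $\epsilon^p\mu^\alpha_i\,d^i_{v_g}$ by the choice of $\tilde d_{v_g}$, so all the $O(\epsilon^p)$ terms assemble into the tangency identity above and cancel. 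Hence the multiplier determined by \eqref{PerFin1} coincides with the unperturbed one; the $v_g$-equation then gives $\dot v_g$ equal to its unperturbed value plus $\epsilon^p d_{v_g}$, the $g$-equation is $\dot g=v_g+\epsilon^p d_g$ by construction, and $D$ remains invariant. Therefore the time-$\epsilon$ Poincar\'e map of \eqref{PerFin1}, read on $D$, is the prescribed discretization $\phi(\epsilon,\cdot)$, and $\tilde d_g,\tilde d_{v_g}$ inherit the smoothness and the $\epsilon$-periodicity of $d$ --- which is the assertion.

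I expect the main obstacle to be exactly this last compatibility step: showing that the abstract perturbation $d$ delivered by Theorem~\ref{TheoPer} on $D$ can be produced by perturbation terms acting on the ambient DAE \eqref{LdAeqs} \emph{without} disturbing the constraint $\mu^\alpha_i(g)v^i_g=0$ or the algebraic relation fixing $\lambda_\alpha$. The positive-definiteness of the Hessian is used twice here --- once for the well-posedness of the unperturbed reduction to $D$, once for that of the perturbed one --- while locality, the manifold (rather than linear) nature of $G$ and $D$, and the smoothness bookkeeping are routine.
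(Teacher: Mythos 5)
Your proposal is correct and follows essentially the same route the paper takes: the paper itself only quotes Theorem \ref{Per11} from \cite{JiSch}, but its proof of the directly analogous reduced result (Theorem \ref{PerTh}) uses exactly your three-step scheme --- project the DAE to an ODE on the constraint set by eliminating $\lambda_\alpha$ via the invertibility of $\mu^\alpha_i M^{ij}\mu^\beta_j$, apply Theorem \ref{TheoPer} in adapted coordinates, then ``unproject'' by identifying the ambient perturbation whose reduction reproduces $\epsilon^p d$. Your explicit verification that the tangency identity $\frac{\der\mu^\alpha_i}{\der g^j}v^i_g d^j_g+\mu^\alpha_i d^i_{v_g}=0$ makes the $O(\epsilon^p)$ terms cancel in the multiplier equation is a clean substitute for the paper's computation with $\mathcal{C}_{\alpha\beta}$ in the reduced setting, and no essential step is missing.
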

An analogous result can be obtained in the reduced setting. There are some differences though; first, we note that the differential equations in \eqref{EuPoiSus} are of order 1 rather than of order 2 as  in \eqref{LdAeqs}; second, the constraints $\bra a^{\alpha},\xi\ket=0$ determine a linear subspace $\ald\subset\al$  instead of a regular submanifold. The procedure to obtain the perturbation of the nonholonomic dynamics produced by a given discretization can be split into three steps here:
\begin{enumerate}
\item Define an ODE evolving on $\ald$ from the Euler-Poincar\'e-Suslov equations \eqref{EuPoiSus} by projection, which we will call Euler-Poincar\'e-Suslov ODE;
\item Apply Theorem \ref{TheoPer};
\item Undo the projection process to recover the perturbed Euler-Poincar\'e-Suslos equations.
\end{enumerate}
Thus, we obtain the following result. 
\begin{theorem}\label{PerTh}
Let $\lp\frac{\der^2l}{\der\xi^a\der \xi^b}\rp$ be a  positive-definite matrix. Any $\ald-$preserving discretization of the Euler-Poincar\'e-Suslov equations \eqref{EuPoiSus}, the consistency order of which is $p$ w.r.t. the dynamical part, can be embedded into the time evolution of a non-autonomous perturbation of  the following form
\begin{equation}\label{PerFin2}
\begin{split}
\frac{\der^2l}{\der \xi^b\der\xi^c}\dot\xi^c&=C_{be}^d\xi^e\frac{\der l}{\der\xi^d} +\lambda_{\alpha}a^{\alpha}_b + \epsilon^p\tilde d_b(\epsilon, t/\epsilon,\xi),\\
\bra a^{\alpha},\xi\ket&=0,
\end{split}
\end{equation}
where  $C_{ab}^e$ denote the structure constants of $\al$ w.r.t. the coordinates chosen.
\end{theorem}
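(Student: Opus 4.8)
The plan is to carry out the three-step scheme announced just before the statement: reduce the Euler-Poincar\'e-Suslov DAE \eqref{EuPoiSus} to an autonomous ODE on the linear subspace $\ald\subset\al$, apply Theorem \ref{TheoPer} to that ODE, and transport the resulting perturbation back to a DAE of the form \eqref{PerFin2}.

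\emph{First step: projection onto $\ald$.} Writing the first equation of \eqref{EuPoiSus} in the local form $\frac{\der^2 l}{\der\xi^b\der\xi^c}\dot\xi^c = C_{be}^d\xi^e\frac{\der l}{\der\xi^d}+\lambda_\alpha a^\alpha_b$ and abbreviating $W_{bc}:=\frac{\der^2 l}{\der\xi^b\der\xi^c}$, the non-degeneracy hypothesis allows me to solve $\dot\xi = W^{-1}(C\!\cdot\!\xi\, l' + \lambda_\alpha a^\alpha)$. Differentiating the constraint $\bra a^\alpha,\xi\ket = 0$ gives $\bra a^\alpha,\dot\xi\ket = 0$, and substituting the previous expression produces the linear system $\bra a^\alpha, W^{-1}a^\beta\ket\,\lambda_\beta = -\bra a^\alpha, W^{-1}(C\!\cdot\!\xi\, l')\ket$ for the multipliers. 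Since $W$, hence $W^{-1}$, is positive-definite and the $a^\alpha$ are independent, the Gram-type matrix $\big(\bra a^\alpha, W^{-1}a^\beta\ket\big)_{\alpha\beta}$ is invertible, so $\lambda=\lambda(\xi)$ is a smooth function on $\ald$; substituting it back yields a smooth vector field $\xi\mapsto f(\xi)$ which, by construction, satisfies $\bra a^\alpha,f(\xi)\ket=0$, i.e. is tangent to $\ald$. This defines the \emph{Euler-Poincar\'e-Suslov ODE} $\dot\xi = f(\xi)$ evolving on $\ald$.

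\emph{Second step: applying Theorem \ref{TheoPer}.} Because $\ald$ is a linear subspace, I pick a linear isomorphism $\ald\cong\R^{n-m}$ with coordinates $y$, in which the Euler-Poincar\'e-Suslov ODE reads $\dot y = h(y)$ for a smooth $h$; a $p$-th order discretization of it is, by Definition \ref{pdisc} read in the reduced setting, a one-step method $y_{k+1}=\phi(\epsilon,y_k)$ consistent of order $p$. Theorem \ref{TheoPer} then provides a function $d(\epsilon,t/\epsilon,y)$, as smooth as $h$ and $\epsilon$-periodic in $t$, such that the time-$\epsilon$ period map of $\dot y = h(y)+\epsilon^p d(\epsilon,t/\epsilon,y)$ coincides with $\phi(\epsilon,\cdot)$.

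\emph{Third step: undoing the projection.} Pulling $d$ back through the isomorphism gives a smooth, $\epsilon$-periodic perturbation $\epsilon^p\delta(\epsilon,t/\epsilon,\xi)$ of $f$ with values in $\ald$, i.e. $\bra a^\alpha,\delta\ket=0$. I then re-express the perturbed ODE $\dot\xi = f(\xi)+\epsilon^p\delta(\epsilon,t/\epsilon,\xi)$ on $\ald$ as a DAE on $\al$ by multiplying through by $W$ and setting $\tilde d_b := W_{bc}\,\delta^c$: since $f$ already solves the unperturbed DAE with multiplier $\lambda(\xi)$ and $\delta$ is tangent to $\ald$, adding $\epsilon^p\delta$ leaves the relation $\bra a^\alpha,\dot\xi\ket=0$ untouched, and one recovers precisely \eqref{PerFin2} together with the unchanged algebraic constraint $\bra a^\alpha,\xi\ket = 0$ (the multipliers need not even be relabeled). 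The main obstacle, as in the $Q=\R^n$ case behind Theorem \ref{Per11}, is the projection/unprojection bookkeeping: one must check that $\lambda(\xi)$ is globally well-defined and smooth on $\ald$ --- which is exactly where positive-definiteness of the Hessian enters, through invertibility of $\big(\bra a^\alpha, W^{-1}a^\beta\ket\big)$ --- and, conversely, that the perturbation delivered by Theorem \ref{TheoPer} in the $y$-coordinates remains tangent to $\ald$ after transport, so that the reconstructed DAE keeps the original constraint. Verifying that Theorem \ref{TheoPer} applies verbatim after the identification $\ald\cong\R^{n-m}$ (smoothness class, one-step structure, order $p$) and the algebra turning the perturbed ODE back into \eqref{PerFin2} are routine.
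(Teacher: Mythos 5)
Your proposal is correct and follows essentially the same three-step route as the paper: eliminate the multipliers via the invertible Gram matrix $\bigl(\bra a^{\alpha},m^{-1}a^{\beta}\ket\bigr)$ to get the ODE on $\ald$, apply Theorem \ref{TheoPer} in linear coordinates on $\ald$, and unproject; your choice $\tilde d_b=W_{bc}\delta^c$ with $\delta$ tangent to $\ald$ is exactly consistent with the paper's identification $\bar d=m^{-1}\tilde d-\mathcal{C}_{\alpha\beta}\,m^{-1}a^{\alpha}\bra a^{\beta},m^{-1}\tilde d\ket$, which reduces to $\bar d=\delta$ and leaves the multipliers unchanged. The only point the paper makes explicit that you leave implicit is the cut-off-function argument showing two points of $\ald$ can be joined by a curve inside $\ald$ in one time step (needed inside the Fiedler--Scheurle construction), but this is automatic once one works in linear coordinates on $\ald$ as you do.
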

For convenience, we outline the proof in the appendix. Of course, it would be ideal, if the perturbed Euler-Poincar\'e-Suslov
equations in \eqref{PerFin2} admitted a Lagrangian structure of some kind. The question, when
this is true and how this is related to specific properties of both, the underlying
discretization of the unperturbed problem as well as the selected type of embedding, is left as a subject of further study.

\section{Conslusions}
We followed the ideas presented in \cite{JiSch} to study the discretization of the Suslov problem. First, concerning the  order of consistency of  discretizations corresponding to the unreduced and reduced settings and related to each other by reduction and reconstruction, respectively, we found that when the discrete reconstruction equation is given by a Cayley retraction map both consistency orders are related to each other, too. The order of consistency carries over unchanged from the unreduced to the reduced setting. It becomes zero in the unreduced setting, no matter how big it is in the reduced setting.  Furthermore, we studied distribution preserving integrators,  showing that this property may be achieved for general numerical schemes. We presented a specific algorithm with that property  based on the general reduced framework.  We considered two examples of different integrators, one of them based on the midpoint rule while the other is based on a variational scheme. As proved, both are order-2 consistent in the dynamical variables, but it is numerically shown that the variational one converges faster to the actual solution and shows a better long-term behaviour. Finally, concerning the discretization understood as perturbation, we proved that any distribution preserving integrator of the Euler-Poincar\'e-Suslov equations (in general)
may be understood as a non-autonomous perturbation of the continuous dynamics.

\medskip\medskip

{\bf Acknowledgments}: We are indebted to the referee for the constructive comments and corrections, which helped a lot to substantially improve the manuscript. 	We thank Carlos Navarrete-Benlloch for his help in the display of numerical results, and also Dimtry Zenkov and Yuri Fedorov for helpful comments during the ``Workshop on Nonholonomic mechanics and optimal control'', held at the Institute Henri Poincar\'e, Paris,  November 2014. The first author is  indebted to Luis Garc\'ia-Naranjo for fruitful discussions during their common visit to Technische Universit\"at Berlin. Finally we thank the Deutsche Forschungsgemeinschaft (DFG) for supporting this research within  the frame of the project B4 of SFB-Transregio 109
``Discretization in Geometry and Dynamics''.

\section*{Appendix: Sketch of the Proof of Theorem \ref{PerTh}}

We first introduce some notation. We set $m\equiv(m_{ab}):=\lp\frac{\der^2l}{\der\xi^a\der \xi^b}\rp$, while $(m^{ab})\equiv m^{-1}$ denotes its inverse (recall that $(m_{ab})$ is regular and positive-definite). With this, the first equation in \eqref{EuPoiSus} may be rewritten as
\begin{equation}\label{EuPoiSusInv}
\dot\xi^b=f^b(\xi)+\lambda_{\alpha}m^{bc}(a^{\alpha})_c,
\end{equation}
where $f^b(\xi):=m^{bc}C^d_{ce}\xi^e\frac{\der l}{\der\xi ^d}.$
\\

(1) Since $\mu^{\alpha}(g)=\ell^*_{g^{-1}}a^{\alpha}$ represent a set of linearly independent one-forms spanning $D_g^{\circ}\subset T^*_gG$, it is easy to see that the $a^{\alpha}$ spanning $(\ald)^{\circ}\subset \al^*$ are also linearly independent. Therefore,  we can decompose the algebra as 
\begin{equation}\label{Decomp}
\al=\ald\oplus(\ald)^{\perp},
\end{equation}
with respect to the metric represented by $(m_{ab})$. Thus, from \eqref{EuPoiSusInv} we can derive an ODE  on $\ald$ by eliminating the Lagrange multipliers. More precisely, if we take the time derivative of the nonholonomic constraints we obtain $\bra a^{\alpha},\dot\xi\ket=0$ (note that $a^{\alpha}$ are constant), which, after replacing $\dot\xi$ by the right hand side of the equation \eqref{EuPoiSusInv}, yields
\begin{equation}\label{LagElim}
\lambda_{\alpha}(\xi)=-\mathcal{C}_{\alpha\beta}\bra a^{\beta},f(\xi)\ket,
\end{equation}
where $\lp\mathcal{C}^{\alpha\beta}\rp:=\lp\bra a^{\alpha},m^{-1}a^{\beta}\ket\rp$ and $\lp\mathcal{C}_{\alpha\beta}\rp=\mathcal{C}^{-1}.$ It is possible to prove the in\-ver\-ti\-bi\-li\-ty of $\mathcal{C}$ using geometric arguments (see for instance \cite{JiSch}, lemma 2.4), or just by arguing that $m$ is of full rank and $a^{\alpha}$ of constant rank. Thus, we obtain the Euler-Poincar\'e-Suslov ODE  on $\ald$ given by
\begin{equation}\label{EuPoiSusODE}
\dot\xi=h(\xi),
\end{equation}
with $\bra a^{\alpha},\xi\ket=0$, where $h(\xi):=f(\xi)+\lambda_{\alpha}(\xi)\,m^{-1}a^{\alpha}$ and $\lambda_{\alpha}(\xi)$ is defined in \eqref{LagElim}.

(2) Now, let us choose adapted coordinates with respect to the decomposition \eqref{Decomp}, say 
$\xi = (\xi^{a}) = (\xi^{\bar a}, \xi^{\hat a})$, where $a=1,...,n$, $\bar a=1,...,n-m$ (corresponding to $\ald$) and $\hat a=n-m+1,...,n$ (corrresponding to $(\ald)^{\perp}$). Since $\al$ is a linear space, the choice of such adapted coordinates is trivial. Projecting \eqref{EuPoiSusODE} onto $\ald$, we obtain 
\[
\dot\xi^{\bar a}= h^{\bar a}(\bar\xi),
\]
where we write $\bar \xi$ for $(\xi^{\bar a})$.  Now, we can apply theorem \ref{TheoPer} to $\dot\xi^{\bar a}= h^{\bar a}(\bar\xi)$, ensuring that any $p-$th order discretization can be
viewed as the time$-\epsilon$ map of a suitable non-autonomous perturbation, which is  $\epsilon-$periodic in $t$, namely
\begin{equation}\label{Per1}
\dot\xi^{\bar a}= h^{\bar a}(\bar\xi)+\epsilon^pd^{\bar a}(\epsilon,t/\epsilon,\bar\xi).
\end{equation}

(3) This basically finishes the proof, since this equation can be derived from  perturbed Euler-Poincar\'e-Suslov equations of the form
\begin{equation}\label{Per3}
\begin{split}
\dot\xi&=f(\xi) +\lambda_{\alpha}\,m^{-1}a^{\alpha} + \epsilon^p\,m^{-1}\,\tilde d(\epsilon, t/\epsilon,\xi),\\
\bra a^{\alpha},\xi\ket&=0.
\end{split}
\end{equation}
Indeed, we see that the first equation in \eqref{Per3} leads to \eqref{Per1} by eliminating the Lagrange multipliers, which gives
\[
\lambda_{\alpha}=\lambda_{\alpha}(\xi)-\epsilon^p\mathcal{C}_{\alpha\beta}\bra a^{\beta},\,m^{-1}\tilde d\ket,
\]
where $\lambda_{\alpha}(\xi)$ is defined in \eqref{LagElim}. Plugging this into \eqref{Per3} we finally obtain \eqref{Per1}, including as well the identification $d = (d^{\bar a}) =m^{-1}\tilde d-\mathcal{C}_{\alpha\beta}\,m^{-1}\,a^{\alpha}\,\bra a^{\beta},m^{-1}\tilde d\ket$.

This finishes the proof.

\end{document}